\date{\today}
\def \al{\alpha}
\def \be{\beta}
\def \ga{\gamma}
\def \dl{\delta}
\def \ep{\varepsilon}
\def \la{\lambda}
\def \ph{\varphi}
\def \om{\omega}
\def \Ga{\Gamma}
\def \Om{\Omega}
\def \operatorname#1{\mathop{\rm #1}}
\def\div{\operatorname{div}}
\def\osc{\operatorname{osc}}
\def\osc2{\operatorname{osc^2}}
\def\cd{\partial}
\def\Q0{Q(x_0,t_0,R)}
\def\0{{x_0,t_0,R}}
\def\build#1_#2{\mathrel{\mathop{\kern 0pt#1}\limits_{#2}}}
\newcommand{\pint}{\mathop{\int\limits{\hspace{-4mm}}-}\limits}
\newtheorem{theorem}{Theorem}[section]
\newtheorem{proposition}{Proposition}[section]
\newtheorem{lemma}{Lemma}[section]
\newtheorem{definition}{Definition}[section]
\begin{document}
\title{Scalar  elliptic   equations   with a singular drift}
\author{Misha Chernobai, Timofey Shilkin}
\date{\today}
\maketitle \abstract{We investigate the weak solvability and
properties of weak solutions to the Dirichlet problem for a scalar
elliptic equation $-\Delta u + b^{(\al)}\cdot \nabla u= f$ in a
bounded domain $\Om\subset \Bbb R^2$ containing the origin, where
$f\in   W^{-1}_q(\Om) $ with  $q>2$ and $b^{(\al)}:=b-\al
\frac{x}{|x|^2}$,   $b$ is a divergence--free vector field and
$\al\in \Bbb R$ is a parameter.}

\maketitle

\bigskip

\section{Introduction and Main Results}\label{Intro}

\bigskip  Assume  $\Om\subset \Bbb R^2 $ is a bounded simply connected domain with a
$C^1$--smooth boundary $\cd \Om$ and $0\in \Om$. We  consider the
following boundary value problem:
\begin{equation}
\left\{ \quad \gathered  -\Delta u + \Big(b -
\al \frac{x}{|x|^2}\Big)\cdot \nabla u \ = \ -\div f \qquad\mbox{in}\quad \Om, \\
u|_{\cd \Om} \ = \ 0. \qquad \qquad
\endgathered\right.
\label{Equation_2D}
\end{equation}
Here  $u:  \Om\to \Bbb R$, $b:\Om\to \Bbb R^2$ and  $f: \Om\to \Bbb
R^2$ are given functions and $\al\in \Bbb R$ is a parameter. We
always assume that the vector field $b$ satisfies the assumptions
\begin{equation}
b\in L_{2,w}(\Om), \qquad \div b = 0 \quad \mbox{in} \quad \mathcal
D'(\Om), \label{Assumptions_b_2D}
\end{equation}
 where $L_{2,w}(\Om)$ is a weak Lebesgue space equipped with the norm
\begin{equation}
\| b\|_{L_{2,w}(\Om)} \ := \ \sup\limits_{\la>0} \la~|\{~x\in \Om:
~|b(x)|>\la~\}|^{\frac 12}. \label{Weak_Lebesgue}
\end{equation}
Note that
$$
\div \frac{x }{|x |^2} \ = \  2\pi \dl_0 \quad \mbox{in} \quad
\mathcal D'(\Om),
$$
where $\dl_0$ is the delta-function concentrated at $x=0$. We also
define the total drift
\begin{equation}
b^{(\al)} \ := \ b-\al \, \frac{x }{|x |^2}. \label{Full_Drift}
\end{equation}
 Then from
\eqref{Assumptions_b_2D} we obtain
$$
b^{(\al)}\in L_{2,w}(\Om), \qquad \| b^{(\al)}\|_{ L_{2,w}(\Om)} \
\le \ C   \left(|\al|+ \| b\|_{ L_{2,w}(\Om)} \right) ,
$$
and the  divergence of $b^{(\al)}$ is sign-definite:  $\div
b^{(\al)} \le 0$ in the sense of distributions if $\al\ge 0$ and
$\div b^{(\al)}>0$ if $\al<0$. The negative sign of $\div b^{(\al)}$
means that the drift term generates the positive increase to the
quadratic form of our differential operator while the negative sign
of $\div b^{(\al)}$ means opposite.
 The goal of this paper is to investigate the existence and
 properties of weak solutions to the problem \eqref{Equation_2D}.

Our research is partly motivated by axially symmetric problems for
the 3D Navier-Stokes equations. In the study of  such problems the
equation
\begin{equation}
\cd_t u -\Delta u + \Big(v -\al \, \frac{x'}{|x'|^2}\Big)\cdot
\nabla u \ = \ 0 \qquad\mbox{in}\quad \Bbb R^3\times (0,T)
\label{NSE}
\end{equation}
plays an important role.  Here $v=v(x,t)$ is the divergence-free
velocity field, $x' = (x_1, x_2,0)^T$, and $u= u(x,t)$ is some
auxiliary scalar function. For example, for axially symmetric
solutions without swirl (i.e. if $v(x,t) = v_r(r,z,t){  e}_r +
v_z(r,z,t){ e}_z$ where ${ e}_r$, ${ e}_\ph$, ${  e}_z$ is the
standard cylindrical basis) the equation \eqref{NSE} is satisfied
for $\al= 2$ and $u= \frac{\om_\ph}r$, where $\om_\ph:=v_{r,z} -
v_{z,r}$ and $r=|x'|$. In the case of general axially symmetric
solutions $v(x,t) = v_r(r,z,t){  e}_r + v_\ph(r,z,t){ e}_\ph +
v_z(r,z,t){  e}_z$ the equation \eqref{NSE} holds for $\al=-2$ and
$u=rv_\ph$.

It is well-known in the Navier-Stokes theory (see, for example,
\cite{KNSS},  \cite{Seregin_Book}, \cite{Seregin_Shilkin_UMN},
\cite{SS}, \cite{Tsai_Book}) that while in the case $\al> 0$ some
results like Liouville-type theorems assume no special conditions on
the solutions $u$ to the equation \eqref{NSE} besides a proper decay
of the drift $v$, the analogues results in the case $\al<0$ require
the additional condition $u|_{\Ga}=0$ where $\Ga:=\{x\in \Bbb R^3:\,
x_1=x_2=0\}$. Our equation \eqref{Equation_2D} can be considered as
a 2D    elliptic model for the general equation \eqref{NSE} and one
of the goals of the present paper is to investigate from the general
point of view the role  the condition $u(0)=0$ (which is modeling
the condition $u|_{\Ga}=0$ in the 3D situation) plays in the theory.

During the last few decades the problem \eqref{Equation_2D} was
intensively studied in the case of the divergence-free drift (i.e.
for $\al=0$), see, for example, the papers \cite{Filonov},
\cite{Filonov_Shilkin}, \cite{IKR}, \cite{Lis_Zhang},  \cite{MV},
\cite{NU}, \cite{SSSZ}, \cite{Vicol}, \cite{Surn}, \cite{Zhang},
\cite{Zhikov} and references there. Papers devoted to the
non-divergence free drifts are not so numerous (see \cite{Kang_Kim},
\cite{Kim_Kim}, \cite{Tsai} for the references). In the general
situation the  drift   can be   decomposed into divergence-free and
potential parts.   So, our problem \eqref{Equation_2D} can also be
interpreted as a model problem containing a singular potential part
of the drift whose divergence is sign-defined.

Our work was motivated to a certain extent by the recent paper
\cite{Tsai} where the problem \eqref{Equation_2D} was studied in the
case of $\Om\subset \Bbb R^n$ with $n\ge 3$ and $b\in L_{n,w}(\Om)$
such that $\div b\in L_{\frac n2,w}(\Om)$, $\div b\le 0$. Our
results concerning the case $\al\ge 0$ can be viewed as a 2D version
of results in \cite{Tsai}. The main technical difference between
\cite{Tsai} and our work is that in the 2D situation the divergence
of the drift is not an integrable function but only a measure.

\bigskip
We define the bilinear form $\mathcal B_\al[u,\eta]$ by
\begin{equation}
\mathcal B_\al[u,\eta] \ := \ \int\limits_\Om  \eta \,
b^{(\al)}\cdot \nabla u  ~dx. \label{Bilinear_Form}
\end{equation}
This form is well-defined at least for $u\in W^1_p(\Om)$ with $p>2$
and $\eta\in L_q(\Om)$ with $q>\frac {2p}{p-2}$. Note that as
$b^{(\al)}$   is singular the drift term in \eqref{Bilinear_Form}
generally speaking is not integrable for $u\in W^1_2(\Om)$. So,
instead of the standard notion of weak solutions from the energy
class $W^1_2(\Om)$ following \cite{Tsai} we introduce the definition
of $p$-weak solutions:

\begin{definition}
Assume $p>2$, $f\in L_p(\Om)$ and $b$ satisfies
\eqref{Assumptions_b_2D}.
 We say $u$ is a $p$-weak solution to the problem
\eqref{Equation_2D} if $u\in \overset{\circ}{W}{^1_p}(\Om)$ and $u$
satisfies the relation
\begin{equation}
\gathered  \int\limits_\Om \nabla u \cdot \nabla\eta ~dx  \ + \
\mathcal B_\al[u,\eta] \ = \ \int\limits_\Om f\cdot \nabla \eta~dx,
\qquad \forall~\eta\in \overset{\circ}{W}{^1_2}(\Om).
\endgathered
\label{Identity_2D}
\end{equation}

\end{definition}
 From the imbedding theorem we obtain that for $p>2$ every  $p$-weak solution to the problem \eqref{Equation_2D}
 is
H\" older continuous:
$$u\in C^{1-\frac 2p}(\bar
\Om).$$ Moreover,  the  form $\mathcal B_\al[u,u]$ in
\eqref{Identity_2D} possesses the following property.

\begin{proposition}\label{Q_Form} Assume $b$ satisfies \eqref{Assumptions_b_2D}.
Then for any $\al\in \Bbb R$ and any $v\in
\overset{\circ}{W}{^1_p}(\Om)$ with $p>2$ the
 quadratic form
$\mathcal B_\al[v,v]$ satisfies the identity
\begin{equation}
\mathcal B_\al[v,v] \ = \   \pi \al\, |v(0)|^2.
\label{Quadratic_Form}
\end{equation}
\end{proposition}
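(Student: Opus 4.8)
The plan is to compute $\mathcal B_\al[v,v]$ by splitting the drift as $b^{(\al)} = b - \al\frac{x}{|x|^2}$ and treating the two pieces separately. For the divergence-free part, I want to show $\int_\Om b\cdot\nabla v\, v\,dx = \tfrac12\int_\Om b\cdot\nabla(v^2)\,dx = 0$, which is the familiar fact that a divergence-free drift contributes nothing to the quadratic form. The subtlety is that $b\in L_{2,w}(\Om)$ only, $v^2\in \overset{\circ}{W}{}^1_{p/2}(\Om)$ with $p/2>1$, and the product $b\cdot\nabla(v^2)$ is integrable (H\"older: $b\in L_{2,w}\subset L_{2-\ep}$, $\nabla(v^2)=2v\nabla v\in L_{p/2}$ with $p/2>1$, and since $v\in L_\infty$ one gets an integrable product for suitable exponents), so $\int_\Om b\cdot\nabla(v^2)\,dx=0$ follows by approximating $v^2$ by smooth compactly supported functions and using $\div b=0$. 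For the singular part I must show
\begin{equation}
-\al\int\limits_\Om \frac{x}{|x|^2}\cdot\nabla v\; v\,dx \ = \ 2\pi\al\,|v(0)|^2 .
\label{singular_piece}
\end{equation}

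The key computation is \eqref{singular_piece}. Writing $\frac{x}{|x|^2}\cdot\nabla v\, v = \tfrac12 \frac{x}{|x|^2}\cdot\nabla(v^2)$, I would integrate over $\Om\setminus B_\rho(0)$ and integrate by parts. Since $\div\frac{x}{|x|^2}=0$ on $\Om\setminus\{0\}$, the only contributions are boundary terms: one on $\cd\Om$, which vanishes because $v\in\overset{\circ}{W}{}^1_p$ so $v^2|_{\cd\Om}=0$; and one on $\cd B_\rho(0)$, namely
\begin{equation}
-\tfrac12\int\limits_{\cd B_\rho} \frac{x}{|x|^2}\cdot\Big(-\frac{x}{|x|}\Big)\,v^2\,dS \ = \ \frac{1}{2\rho}\int\limits_{\cd B_\rho} v^2\,dS \ \longrightarrow \ \frac12\cdot 2\pi\,|v(0)|^2 \ = \ \pi\,|v(0)|^2
\end{equation}
as $\rho\to0$, using the continuity of $v$ at the origin (which holds since $v\in C^{1-2/p}(\bar\Om)$). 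Multiplying by $-\al$ and by the factor $2$ coming from $\nabla(v^2)=2v\nabla v$ gives $2\pi\al|v(0)|^2$. I must also check that the integral $\int_{\Om\setminus B_\rho}\frac{x}{|x|^2}\cdot\nabla(v^2)\,dx$ converges as $\rho\to0$ to the integral over $\Om$; this is fine because $\frac{x}{|x|^2}\in L_{2,w}$ and $\nabla(v^2)\in L_{p/2}$ with $p/2>1$, so the integrand is in $L^1(\Om)$ and the tail over $B_\rho$ tends to zero by absolute continuity of the integral.

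The main obstacle I anticipate is the low regularity of $b$: one cannot directly integrate by parts against $b$ since $b$ is not smooth, and $\nabla(v^2)$ is only in $L_{p/2}$ rather than $L_2$. The remedy is a careful approximation argument — mollify $v$ (or rather $v^2$) by functions in $C_0^\infty(\Om)$ in the $\overset{\circ}{W}{}^1_{p/2}$ norm, pass to the limit in $\int b\cdot\nabla\varphi_k\,dx=0$, and use that $b\in L_{2,w}\hookrightarrow L_{(p/2)'}$ for the H\"older pairing (checking that $(p/2)' < 2$, which holds since $p>2$ forces $p/2>1$ hence $(p/2)'<\infty$, and one needs $(p/2)' \le 2-\ep$; this works when $p$ is not too close to $2$, and for the remaining range one uses that $v\in L_\infty$ to gain integrability: $b\cdot v\nabla v$ with $b\in L_{2,w}$, $v\in L_\infty$, $\nabla v\in L_p$, $p>2$, pairs fine). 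Once the divergence-free part is shown to vanish and the singular part is evaluated by the explicit boundary-term computation above, formula \eqref{Quadratic_Form} follows.
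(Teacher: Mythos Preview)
The paper does not actually supply a proof of this proposition; it is stated in the introduction and used throughout, but no argument is given. Your approach is the natural one and is essentially correct: split $b^{(\al)}=b-\al\,\frac{x}{|x|^2}$, show the divergence-free part contributes nothing via $\int_\Om b\cdot\nabla(v^2)\,dx=0$ (since $v\in L_\infty$ by Sobolev embedding, one has $v^2\in\overset{\circ}{W}{}^1_p(\Om)$ with $\nabla(v^2)=2v\nabla v\in L_p$, which pairs against $b\in L_{2,w}(\Om)\subset L_{p'}(\Om)$ and allows approximation by $C_0^\infty(\Om)$), and evaluate the singular part by excising $B_\rho$ and passing to the limit using continuity of $v$ at the origin.

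There is, however, a computational slip in your boundary calculation. After writing $v\nabla v=\tfrac12\nabla(v^2)$ and integrating by parts over $\Om\setminus B_\rho$, the divergence theorem gives the inner boundary contribution to $\tfrac12\int_{\Om\setminus B_\rho}\frac{x}{|x|^2}\cdot\nabla(v^2)\,dx$ with a \emph{plus} sign, namely
\[
\frac12\int\limits_{\cd B_\rho}\frac{x}{|x|^2}\cdot\Big(-\frac{x}{|x|}\Big)v^2\,dS \ = \ -\frac{1}{2\rho}\int\limits_{\cd B_\rho}v^2\,dS\ \longrightarrow\ -\pi\,|v(0)|^2,
\]
and multiplying by $-\al$ alone yields $\pi\al\,|v(0)|^2$. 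The extra factor $2$ you introduce at the end double-counts the identity $v\nabla v=\tfrac12\nabla(v^2)$, already absorbed in the $\tfrac12$. Thus the correct constant is $\pi\al$, not $2\pi\al$. The paper itself is inconsistent on this point (compare the statement of the proposition with the formulas appearing in the proofs of Theorems~\ref{L_infinity_estimate} and~\ref{Higher_Integrability_1}, where the constant is written simply as $\al$), and the discrepancy is harmless for every application in the paper, which uses only the sign of $\mathcal B_\al[v,v]$ and its vanishing when $v(0)=0$.
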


\begin{proof}   Taking into account \eqref{Assumptions_b_2D} and $\div \frac x{|x|^2}=0$ in $\Om\setminus \{ 0\}$  from \eqref{Bilinear_Form} integrating by parts we
obtain
$$
\mathcal B_\al[v,v] \ = \ -\lim\limits_{\ep\to 0}
\int\limits_{\Om\setminus B_\ep}   \al\frac{x}{\, |x|^2}\cdot \nabla
\frac{|v|^2}2 ~dx \ = \ \lim\limits_{\ep\to 0} \frac \al{2\ep}
\int\limits_{\cd B_\ep} |v|^2~ds \ = \ \pi \al \, |v(0)|^2.
$$
Here we denote $B_\ep:=\{~x\in \Bbb R^2: \, |x|<\ep~\}$.
\end{proof}

To demonstrate effects concerning  weak solvability of the problem
\eqref{Equation_2D} we take $f\equiv 0$, $b\equiv 0$,  and consider
the equation
\begin{equation}
\left\{ \quad \gathered -\Delta u - \al \, \frac{x}{|x|^2}\cdot
\nabla u \ = \ 0  \qquad\mbox{in} \quad B,
\\
u|_{\cd B} \ = \ 0,  \qquad \qquad \endgathered \right.
\label{Radial}
\end{equation}
 in the
unit ball $B:=\{~x\in \Bbb R^2:\, |x|<1~\}$.  For
 radial solutions $u(x) = v(|x|)$  the equation \eqref{Radial}
 reduces to the ODE for the
Darboux-type operator
$$
v''(r) \ + \ \frac{\al+1}r ~v'(r) \ = \ 0, \qquad r\in (0,1).
$$
The family of solutions to this ODE is $v(r)= c_1 ~r^{-\al}+ c_2$ if
$\al\not=0$ and $c_1\ln r + c_2$ if $\al=0$. If $\al\ge 0$ then the
only weak radial solution $u(x)=v(|x|)$ to the problem
\eqref{Radial} such that $u\in W^1_2(B)$ is identically zero. In
contrast, if $\al<0$ then we have a one-parameter family $v(x)=
c(r^{|\al|}-1)$ of non-trivial $p$-weak solutions to the problem
\eqref{Radial} with some $p=p(\al)$, $p>2$. In particular, for
$\al<0$ $p$-weak solutions of the problem \eqref{Radial} are
non-unique, see also \cite{Mosc} for a related result. Nevertheless,
if for $\al<0$  we add the additional condition $u(0)=0$  to the
problem \eqref{Radial}
 then  any radial  $p$-weak
solution  to  \eqref{Radial} satisfying this requirement is
identically zero. This hints that in the case of $\al<0$ the
condition $u(0)=0$ can serve as an additional requirement that
provides uniqueness for the  problem \eqref{Equation_2D}. Our goal
is to investigate existence of $p$-weak solutions satisfying this
requirement.

\bigskip
The main results of the present paper are the following two
theorems:

\begin{theorem}\label{Theorem_1} Assume $\al \ge 0$,  $b$ satisfies
\eqref{Assumptions_b_2D} and   $q>2$. Then there exists $p>2$
depending only on $q$, $\Om$, $\al$ and $\| b\|_{L_{2,w}(\Om)}$ such
that  for any $f\in L_q(\Om)$  there exists a unique $p$-weak
solution $u$ to
  the problem \eqref{Equation_2D}. Moreover, this solution satisfies the estimate
\begin{equation}
\| u\|_{W^1_p(\Om)} \ \le \ C~\| f\|_{L_q(\Om)},
\label{Main_Estimate}
\end{equation}
 with the
constant $C$ depending only on $q$, $p$, $\al$, $\Om$ and $\|
b\|_{L_{2,w}(\Om)}$.
\end{theorem}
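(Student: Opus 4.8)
The natural approach is a two-tier argument: first establish an a priori estimate in $W^1_p$ for $p$-weak solutions with $p$ slightly above $2$ (the $p$ will shrink toward $2$ as $\|b\|_{L_{2,w}}$ or $\alpha$ grows), then use this estimate together with a continuation/approximation argument to produce a solution. Let me sketch the steps.

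First I would regularize the drift. Replace $b^{(\alpha)}$ by bounded, smooth, divergence-controlled fields $b_\varepsilon^{(\alpha)}$ — e.g. truncate $\frac{x}{|x|^2}$ away from the origin and mollify $b$ — in such a way that $\|b_\varepsilon\|_{L_{2,w}(\Om)}$ stays bounded uniformly in $\varepsilon$, $\div b_\varepsilon^{(\alpha)} \le 0$ is preserved (or at least $\le$ a small negative measure/function), and $b_\varepsilon^{(\alpha)} \to b^{(\alpha)}$ in an appropriate sense. For the regularized problem the drift is bounded, so classical theory (Lax–Milgram in $\WWW{^1_2}$, using that $\mathcal B_\alpha[u,u] = 2\pi\alpha|u(0)|^2 \ge 0$ by Proposition 1.1, which is exactly the sign that makes the bilinear form coercive when $\alpha \ge 0$) gives a unique weak solution $u_\varepsilon \in \WWW{^1_2}(\Om)$ with $\|\nabla u_\varepsilon\|_{L_2} \le C\|f\|_{L_2}$.

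The heart of the matter is the uniform higher-integrability estimate: $\|u_\varepsilon\|_{W^1_p(\Om)} \le C\|f\|_{L_q(\Om)}$ with $C, p$ independent of $\varepsilon$. Here I would use a Meyers-type / reverse-Hölder argument adapted to the singular drift. Writing the equation as $-\Delta u_\varepsilon = \div(f - b_\varepsilon^{(\alpha)} u_\varepsilon) + \text{(correction from }\div b_\varepsilon^{(\alpha)}\text{)}$ — more precisely moving the drift term to the right side via $b\cdot\nabla u = \div(bu) - u\,\div b$ and exploiting $\div b^{(\alpha)} \le 0$ with the right sign — one gets a representation where the "bad" term $b_\varepsilon^{(\alpha)} u_\varepsilon$ lies in $L_{2,w}\cdot L^\infty_{\mathrm{loc}}$-type spaces. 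Since $u_\varepsilon$ is already known to be bounded (Hölder, from the $W^1_2$ bound plus Stampacchia/De Giorgi using the good sign of the drift), the product $b_\varepsilon^{(\alpha)} u_\varepsilon \in L_{2,w}(\Om)$ with norm controlled by $\|b\|_{L_{2,w}}\|u_\varepsilon\|_{L^\infty}$. Then Calderón–Zygmund / Gehring's lemma for the Laplacian upgrades $\nabla u_\varepsilon \in L_2$ to $\nabla u_\varepsilon \in L_p$ for some $p = p(q,\Om,\alpha,\|b\|_{L_{2,w}}) > 2$, with the stated estimate. The main obstacle — and the place requiring real care — is making this self-improvement quantitative and closing the loop: the gain in integrability must beat the loss coming from multiplying by $b^{(\alpha)} \in L_{2,w}$, and one must check that the $L^\infty$ bound on $u_\varepsilon$ does not itself secretly depend on $p$. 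In 2D the borderline nature of $L_{2,w}$ (the scaling-critical space) makes this delicate; one likely needs a smallness-localization step (work on small balls where the local $L_{2,w}$-norm of $b$ is small, handle the singular part $\frac{x}{|x|^2}$ near the origin separately using its explicit homogeneity and the sign $\alpha \ge 0$).

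Finally, passing to the limit $\varepsilon \to 0$: the uniform bound gives $u_\varepsilon \rightharpoonup u$ weakly in $\WWW{^1_p}(\Om)$, hence $u_\varepsilon \to u$ strongly in $C(\bar\Om)$ and in $W^1_r$ for $r<p$; this is enough to pass to the limit in the weak formulation \eqref{Identity_2D} (the product $b_\varepsilon^{(\alpha)}\cdot\nabla u_\varepsilon\,\eta$ converges since $\nabla u_\varepsilon$ converges strongly in $L_r$ for $r$ close to $p$ against $\eta \in \WWW{^1_2}$ and $b_\varepsilon^{(\alpha)}\to b^{(\alpha)}$), yielding a $p$-weak solution with the estimate \eqref{Main_Estimate}. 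Uniqueness: given two $p$-weak solutions, their difference $w \in \WWW{^1_p}$ is admissible as a test function (since $p > 2$ it lies in $\WWW{^1_2}$), and $\mathcal B_\alpha[w,w] = 2\pi\alpha|w(0)|^2 \ge 0$ forces $\|\nabla w\|_{L_2}^2 \le 0$, so $w \equiv 0$. This last step is clean precisely because $\alpha \ge 0$; it is exactly where the $\alpha < 0$ case (Theorem 1.2) will need the extra hypothesis $u(0)=0$.
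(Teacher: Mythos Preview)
Your overall architecture matches the paper's: regularize the drift (the paper takes $b_\varepsilon^{(\alpha)} = b_\varepsilon - \alpha\,\dfrac{x}{|x|^2+\varepsilon^2}$ with $b_\varepsilon$ coming from an approximation lemma in the Appendix), solve the regularized problem classically, pass to the limit $\varepsilon\to 0$ using a uniform $W^1_p$-bound, and read off uniqueness from Proposition~\ref{Q_Form} via $\mathcal B_\alpha[w,w]=2\pi\alpha|w(0)|^2\ge 0$. Those steps are fine and essentially identical to the paper's.

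The gap is in your proposed mechanism for the uniform higher-integrability bound. Moving the drift to the right-hand side as $\div(b^{(\alpha)}u)$ and invoking an $L^\infty$ bound on $u$ only places the new source in $L_{2,w}$, and Calder\'on--Zygmund for $-\Delta u=\div g$ with $g\in L_{2,w}$ returns merely $\nabla u\in L_{2,w}$, not $L_p$ for any $p>2$. The smallness-by-localization idea you mention also fails exactly at the origin: the scale-invariant piece $\alpha\,x/|x|^2$ has the \emph{same} $L_{2,w}$-norm on every ball $B_r$, so it never becomes small. The paper does not move the drift to the right. In the local Caccioppoli step with test function $\zeta^2\bar u$ (where $\bar u=u-(u)_{B_{2R}(x_0)}$), the drift splits as
\[
\mathcal B_\alpha[u,\zeta^2\bar u]=\mathcal B_\alpha[\zeta\bar u,\zeta\bar u]\ -\ \int b^{(\alpha)}\cdot \zeta\,|\bar u|^2\,\nabla\zeta\,dx.
\]
By Proposition~\ref{Q_Form} the first term equals $\alpha\,|\zeta\bar u(0)|^2\ge 0$ and is simply dropped---this is where the sign $\alpha\ge 0$ is actually used. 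The remaining cross-term is genuinely subcritical because $\nabla\zeta$ supplies an extra $R^{-1}$: via Lorentz--H\"older and Sobolev one gets a bound by $C\,\|b^{(\alpha)}\|_{L_{2,w}}\,\|\nabla u\|_{L_{10/7}(B_{2R})}^2$ (after $\|\bar u\|_{L^{4,2}}\le C\|\bar u\|_{L_5}\le C\|\nabla u\|_{L_{10/7}}$). This produces a true reverse H\"older inequality with exponent $10/7<2$ on the right, and Gehring's lemma then gives $p>2$ depending only on $q$, $\Omega$, $\alpha$, $\|b\|_{L_{2,w}}$. No $L^\infty$ bound on $u$ is used in this step.
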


\begin{theorem}\label{Theorem_2} Assume $\al < 0$,  $b$ satisfies
\eqref{Assumptions_b_2D} and   $q>2$. Then there exists $p>2$
depending only on $q$, $\Om$, $\al$ and $\| b\|_{L_{2,w}(\Om)}$ such
that  for any $f\in L_q(\Om)$  there exists a unique $p$-weak
solution $u$ to
  the problem \eqref{Equation_2D} satisfying the condition  $u(0)=0$.
  Moreover, this solution satisfies the estimate
  \eqref{Main_Estimate}
with the constant $C$ depending only on $q$, $p$, $\al$, $\Om$ and
$\| b\|_{L_{2,w}(\Om)}$.

\end{theorem}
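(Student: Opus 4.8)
The plan is to deduce uniqueness directly from the energy identity of Proposition~\ref{Q_Form}, to establish an a~priori $W^1_p$--bound for solutions subject to $u(0)=0$, and to obtain existence from a regularization of the singular drift together with a homogeneous corrector. Throughout, $p>2$ is a fixed exponent, close to $2$, whose admissible range will be dictated by $q,\Om,\al$ and $\|b\|_{L_{2,w}(\Om)}$.

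For uniqueness I would argue as follows. If $u_1,u_2$ are $p$--weak solutions with $u_i(0)=0$ (we may assume the same exponent $p$ for both), then $w:=u_1-u_2\in\overset{\circ}{W}{^1_p}(\Om)$ satisfies the homogeneous version of \eqref{Identity_2D}; since for a $p$--weak solution the drift term is defined against every $\eta\in\overset{\circ}{W}{^1_2}(\Om)$, I may take $\eta=w$ and obtain $\int_\Om|\nabla w|^2\,dx+\mathcal B_\al[w,w]=0$. By Proposition~\ref{Q_Form}, $\mathcal B_\al[w,w]=2\pi\al|w(0)|^2=0$, whence $w\equiv0$ by Poincar\'e's inequality. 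This is the only point where the side condition $u(0)=0$ enters the uniqueness, and it is precisely what compensates the unfavourable sign of $\mathcal B_\al$ when $\al<0$.

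For the a~priori estimate, let $u$ be a $p$--weak solution with $u(0)=0$. Taking $\eta=u$ and using $\mathcal B_\al[u,u]=2\pi\al|u(0)|^2=0$ gives the energy bound $\|\nabla u\|_{L_2(\Om)}\le\|f\|_{L_2(\Om)}$. Next I would run a De~Giorgi--Nash--Moser iteration adapted to critical drifts ($b\in L_{2,w}$ is divergence free and $\div b^{(\al)}$ is a finite measure) to get $\|u\|_{L_\infty(\Om)}+[u]_{C^\ga(\bar\Om)}\le C\|f\|_{L_q(\Om)}$ for some $\ga\in(0,1)$: in the truncated tests with $(u-k)_{\pm}$ the singular field $-\al x/|x|^2$ contributes a term which is a multiple of $\big((u(0)-k)_{\pm}\big)^2$ and hence vanishes once $\pm(k-u(0))\ge0$, so $u(0)=0$ makes the iteration from above and from below admissible. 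Finally, to push $\nabla u$ past the exponent $2$ I would run Caccioppoli's inequality on balls away from the origin --- where $b$ is absorbed by the standard two--dimensional device of writing $b=\nabla^{\perp}\psi$ with $\psi$ of small $\mathrm{BMO}$--norm --- and on balls $B_\rho$ centred at the origin, where the singular drift contributes, after integration by parts, a multiple of $\zeta(0)^2u(0)^2=0$ plus routine Caccioppoli terms; by the scale invariance of $x/|x|^2$ the resulting constants depend only on $\al,\|b\|_{L_{2,w}(\Om)}$ and not on $\rho$. Gehring's lemma then raises the exponent, and iterating the localized estimates yields \eqref{Main_Estimate}. I expect this step --- forcing the gradient strictly past the scaling--critical exponent $2$ in the presence of the critical singular drift --- to be the main technical obstacle; it hinges on the interplay near the origin between the H\"older continuity of $u$ (where $u(0)=0$ is used), the exact scale invariance of $x/|x|^2$, and the divergence--free structure of $b$, and correspondingly $p-2$ is small and governed by $|\al|$ and $\|b\|_{L_{2,w}(\Om)}$.

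For existence I would regularize the drift: choose divergence--free $b_\ep\in C^\infty(\bar\Om)$ with $b_\ep\to b$ in $L_{2,w}(\Om)$, set $K_\ep(x):=x/(|x|^2+\ep^2)$ (smooth, bounded, $|K_\ep|\le|x|^{-1}$, $\div K_\ep=m_\ep\ge0$, $\int_\Om m_\ep\,dx=2\pi$, $m_\ep\rightharpoonup2\pi\dl_0$) and $b^{(\al)}_\ep:=b_\ep-\al K_\ep$. Since $-\Dl+b^{(\al)}_\ep\cdot\nabla$ has bounded coefficients and no zero--order term, the problem \eqref{Equation_2D} with $b^{(\al)}$ replaced by $b^{(\al)}_\ep$ has a unique weak solution $u_\ep\in\overset{\circ}{W}{^1_2}(\Om)$ (uniqueness by the maximum principle, existence by the Fredholm alternative). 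The estimates of the previous step apply uniformly in $\ep$ --- they only use $\|b^{(\al)}_\ep\|_{L_{2,w}}\le C(|\al|+\|b\|_{L_{2,w}})$ and $\div K_\ep\ge0$, not $u_\ep(0)=0$ --- so $\|u_\ep\|_{W^1_p(\Om)}\le C\|f\|_{L_q(\Om)}$; extracting $u_\ep\rightharpoonup u_0$ in $W^1_p(\Om)$, $u_\ep\to u_0$ in $C(\bar\Om)$ and passing to the limit in \eqref{Identity_2D} (using $\nabla u_\ep\rightharpoonup\nabla u_0$ in $L_p$ and $b_\ep\to b$, $K_\ep\to x/|x|^2$ strongly in $L_{2-\dl,\,loc}(\Om)$) produces a $p$--weak solution $u_0$ of \eqref{Equation_2D}. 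In general $u_0(0)\ne0$ --- the radial example already shows this --- so I would also construct a non--trivial homogeneous solution: solving $-\Dl h_\ep+\div(h_\ep b^{(\al)}_\ep)=|\al|m_\ep$ in $\Om$, $h_\ep|_{\cd\Om}=0$ (uniquely solvable because the adjoint operator $-\Dl-b^{(\al)}_\ep\cdot\nabla$ obeys the maximum principle) gives, by the same estimates, uniform $W^1_p$--bounds for $h_\ep$ (the unbounded Newtonian potential of the measure--like source being absorbed into the drift term, which forces $h_\ep(0)\to1$), and the limit $h$ satisfies $-\Dl h+\div(hb^{(\al)})=-2\pi\al\dl_0$; since $h\in W^1_p(\Om)\subset C(\bar\Om)$ cannot carry a logarithmic singularity, this forces $h(0)=1$, hence $-\Dl h+b^{(\al)}\cdot\nabla h=0$. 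Thus $h\in\overset{\circ}{W}{^1_p}(\Om)$ is a non--trivial $p$--weak solution of the homogeneous problem with $h(0)=1$, and $u:=u_0-u_0(0)\,h$ is a $p$--weak solution of \eqref{Equation_2D} satisfying $u(0)=0$; applying the a~priori estimate to $u$ yields \eqref{Main_Estimate}, which completes the argument.
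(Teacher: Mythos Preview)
Your uniqueness argument and the overall a~priori estimate strategy match the paper's (Theorems~\ref{Enegy_norm_estimate}--\ref{Higher_Integrability_2}). The existence argument, however, has a genuine gap. Once you regularize the singular part of the drift by $K_\ep=x/(|x|^2+\ep^2)$, the full drift $b^{(\al)}_\ep=b_\ep-\al K_\ep$ has $\div b^{(\al)}_\ep=-\al\,m_\ep=|\al|\,m_\ep\ge 0$ because $\al<0$; this is the \emph{unfavourable} sign. Testing with $u_\ep$ gives
\[
\int_\Om|\nabla u_\ep|^2\,dx \ = \ \int_\Om f\cdot\nabla u_\ep\,dx \ + \ \frac{|\al|}{2}\int_\Om m_\ep\,u_\ep^2\,dx,
\]
and the last term cannot be dropped. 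The same obstruction recurs in the De~Giorgi truncations and in the Caccioppoli inequality at the origin. Your assertion that these estimates ``only use $\div K_\ep\ge 0$, not $u_\ep(0)=0$'' contradicts your own previous paragraph, where $u(0)=0$ was invoked at each of those steps precisely to kill the wrong-sign contribution. Since there is no reason for $u_\ep(0)=0$, you have no uniform $W^1_2$, $L_\infty$, or $W^1_p$ control on $u_\ep$, and the corrector $h_\ep$ (whose source $|\al|\,m_\ep$ concentrates to a Dirac mass) suffers from the same lack of uniform bounds.

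The paper sidesteps this by \emph{never} regularizing $x/|x|^2$. For $b\equiv 0$ it uses the substitution $u(x)=|x|^{|\al|}w(x)$ (Theorems~\ref{4.1}--\ref{Zero_b_negative}): a direct computation shows that $u$ solves $-\Delta u-\al\,\tfrac{x}{|x|^2}\cdot\nabla u=-\div f$ with $u(0)=0$ precisely when $w$ solves $-\Delta w-|\al|\,\tfrac{x}{|x|^2}\cdot\nabla w=-|x|^{\al}\div f$, which is the good-sign case already covered by Theorem~\ref{Theorem_1}. A smooth divergence-free $b$ is then added by a Leray--Schauder fixed point, freezing $b\cdot\nabla u$ on the right-hand side (Theorems~\ref{4.4}--\ref{4.5}); only at the end is $b$ approximated by smooth $b_\ep$, with the singular drift kept exact, so every approximate solution already satisfies $u^\ep(0)=0$ and Theorem~\ref{Higher_Integrability_2} applies uniformly.
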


Note that the assumption $\cd \Om$ is of class $C^1$  in Theorems
\ref{Theorem_1} and \ref{Theorem_2} actually is made only for
brevity and our proofs can be extended for more general domains.

\medskip\noindent
{\bf Acknowledgement.} The research of the second author was
supported by RFBR, grant  20-01-00397.

\medskip
Our paper is organized as follows: in Section \ref{A priori
estimates Section} we prove some a priori estimates of $p$-weak
solutions including higher integrability results, in Section
\ref{Holder_Continuity Section} we prove the estimate of the H\"
older norm of $p$-weak solutions in the case of $\al<0$ which turns
out to be a crucial step in our proof of the higher integrability of
$\nabla u$ if $\al<0$, in Section \ref{Proof of Main Results} we
present the proofs of Theorems \ref{Theorem_1} and \ref{Theorem_2},
in Appendix we recall some known facts and prove an approximation
result for divergence-free functions from $L_{2,w}(\Om)$.

\medskip

In the paper we use the following notation. For any $a$, $b\in
\mathbb  R^n$ we denote by $a\cdot b$ their  scalar product in
$\mathbb R^n$. We denote by $L_p(\Omega)$ and $W^k_p(\Omega)$ the
usual Lebesgue and Sobolev spaces. $C_0^\infty(\Om)$ is the space of
smooth functions compactly supported in $\Om$. The space
$\overset{\circ}{W}{^1_p}(\Omega)$ is the closure of
$C_0^\infty(\Omega)$ in $W^1_p(\Omega)$ norm.  The space of
distributions on $\Om$ is denoted by $\mathcal D'(\Om)$. By $C(\bar
\Omega)$ and $C^\alpha(\bar \Omega)$, $\alpha\in (0,1)$ we denote
the spaces of continuous and H\" older continuous functions on $\bar
\Omega$.   For $q\in [1, +\infty)$ let
$L_{q,w}(\Om)$ be a weak Lebesgue space.

 The symbols $\rightharpoonup$ and $\to $ stand for the
weak and strong convergence respectively. We denote by $B_R(x_0)$
the ball in $\mathbb R^n$ of radius $R$ centered at $x_0$ and write
$B_R$ if $x_0=0$. We write also $B$ instead of $B_1$. For
$\om\subset \Bbb R^2$ we  denote the average of $f$ over $\om$ by
$$
(f)_\om \ := \ -\!\!\!\!\!\!\int\limits_\om f~dx \ = \ \frac
1{|\om|}~\int\limits_\om f~dx.
$$
For $f$, $g: \Bbb R^2\to \Bbb R$ we denote  by $f*g$  the
convolution
$$
(f*g)(x) \ := \ \int\limits_{\Bbb R^2} f(x-y)g(y)~dy.
$$

\newpage
\section{Higher Integrability Results} \label{A priori estimates
Section}
\setcounter{equation}{0}

\bigskip
In this section we recall a priori estimates for weak solutions to
the problem \eqref{Equation_2D}. Our first theorem is the energy
estimate:

\begin{lemma} \label{Enegy_norm_estimate}
Assume  $\al\in \Bbb R$ and  $b$  satisfies
\eqref{Assumptions_b_2D}. Let $u$ be a  $p$-weak solution to the
problem  \eqref{Equation_2D},  corresponding to some  $f\in
L_p(\Om)$ with $p>2$. In the case of $\al<0$ assume additionally
that $ u(0) = 0$. Then the estimate
 \begin{equation}
\| u\|_{W^1_2(\Om)} \ \le \ C~\| f\|_{L_2(\Om)} \label{Energy}
 \end{equation}
holds with some constant  $C>0$ depending only on $\Om$.
\end{lemma}

\begin{proof}
We take $\eta=u$ in \eqref{Identity_2D} and use
\eqref{Quadratic_Form}.
\end{proof}

The next result is the global estimate of $L_\infty$-norms of
$p$-weak solutions.

\begin{lemma} \label{L_infinity_estimate}
Assume $\al\in \Bbb R$ and  $b$ satisfies \eqref{Assumptions_b_2D}.
Let $u\in \overset{\circ}{W}{^1_p}(\Om)$ be  a $p$-weak solution to
the problem \eqref{Equation_2D} corresponding to some $f\in
L_q(\Om)$ with $q>2$. In the case of $\al<0$ assume additionally
that $ u(0)  =  0$.  Then $u\in L_\infty(\Om)$ and the estimate
 \begin{equation}
\| u\|_{L_\infty(\Om)} \ \le \ C~\| f\|_{L_q(\Om)}
\label{A_priori_estimate_negative_alpha}
 \end{equation}
 holds  with some constant $C$ depending only on $\Om$ and $q$.

\end{lemma}

\begin{proof} Assume $k\ge 0$ and take $\eta=(u-k)_+:=\max\{0, u-k\}$ in
\eqref{Identity_2D}. Note that if   $u(0)=0$ and  $k\ge 0$ then
$(u-k)_+(0)=0$. From \eqref{Quadratic_Form} we obtain
$$
\mathcal B_\al[u, (u-k)_+] \ = \ \mathcal B_\al[(u-k)_+, (u-k)_+] \
= \ \pi \al \, |(u-k)_+|^2(0).
$$
The last term in non-negative for $\al\ge 0$ and vanishes if
$\al<0$, $u(0)=0$, $k\ge 0$. Hence from \eqref{Identity_2D} we
obtain
$$
\int\limits_{A_k} |\nabla u|^2~dx  \ \le \ \int\limits_{A_k} f \cdot
\nabla u~dx, \qquad \forall~k\ge0.
$$
where we denote $A_k \ := \ \{ ~x\in \Omega: ~u(x)>k~\}$. The rest
of the proof goes as in the usual elliptic theory. Applying the  H\"
older inequality we obtain
$$
\int\limits_{A_k} |\nabla u|^2~dx \ \le \
\|f\|_{L_q(\Omega)}^2~|A_k|^{ \dl} , \qquad \forall~k\ge
 0,
$$
where  $ \dl:=2\left(\frac 12-\frac 1q\right)>0$. This inequality
yields the following estimate, see \cite[Chapter II, Lemma 5.3]{LU},
$$
\operatorname{esssup}\limits_\Omega  u _+ \ \le \ C(q,
\Omega)~\|f\|_{L_q(\Omega)} .
$$
where $u_+:=\max\{ 0,u\}$. The estimate of
$\operatorname{esssup}\limits_\Omega u_-$ where $u_-:=\max\{ 0,
-u\}$  can be obtained in a similar way if we replace $u$ by $-u$.
\end{proof}

Now we prove  a  higher integrability
result for $\nabla u$ in the case of $\al\ge 0$. Our proof is based
on  the reverse H\" older inequality for $\nabla u$, see, for
example, \cite[Chapter V]{Giaquinta}.

\begin{theorem} \label{Higher_Integrability_1}
Assume $\al\ge 0$,  $b$ satisfies \eqref{Assumptions_b_2D} and
$q>2$. Then there exists $p>2$ depending only on $q$, $\Om$, $\al$
and $\| b\|_{L_{2,w}(\Om)}$ such that for any $f\in L_q(\Om)$ and
any $p$-weak solution $u$  to the problem \eqref{Equation_2D} the
estimate \eqref{Main_Estimate}
 holds  with some constant $C$ depending only on $\Om$, $q$, $\al$ and $\|
 b\|_{L_{2,w}(\Om)}$.
\end{theorem}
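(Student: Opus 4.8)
The goal is a self-improving (Gehring–Giaquinta–Modica) argument: starting from a $p$--weak solution with $p>2$ only slightly above $2$, we derive a reverse H\"older inequality for $\nabla u$ and conclude that $\nabla u$ in fact lies in $L_{p}$ for some $p>2$ determined solely by the data, together with the quantitative bound \eqref{Main_Estimate}. Throughout we use that $b^{(\al)}\in L_{2,w}(\Om)$ with norm controlled by $|\al|+\|b\|_{L_{2,w}(\Om)}$, and that for $\al\ge 0$ the term $\mathcal B_\al[\,\cdot\,,\,\cdot\,]$ is "good" in the sense that, by Proposition \ref{Q_Form}, the associated quadratic form is nonnegative; this is what lets us avoid imposing $u(0)=0$ here.

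\smallskip\noindent
\textbf{Step 1: interior and boundary Caccioppoli inequalities.} Fix a ball $B_{2\rho}(x_0)$ (either interior to $\Om$, or centered at a boundary point where we use the zero boundary condition and the $C^1$ regularity of $\cd\Om$ to straighten the boundary). Choose a cutoff $\zeta\in C_0^\infty(B_{2\rho}(x_0))$, $\zeta\equiv 1$ on $B_\rho(x_0)$, $|\nabla\zeta|\le C/\rho$, and test \eqref{Identity_2D} with $\eta=\zeta^2(u-c)$, where $c$ is the mean of $u$ over the annulus (interior case) or $c=0$ (boundary case). The principal part gives the usual $\int \zeta^2|\nabla u|^2$ minus cross terms absorbed via Young. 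The drift term is
\[
\mathcal B_\al[u,\zeta^2(u-c)]=\int_{B_{2\rho}} b^{(\al)}\cdot\nabla u\;\zeta^2(u-c)\,dx.
\]
Here I would \emph{not} try to bound this crudely; instead, following the standard trick for divergence-free-type drifts, split $\nabla u\,\zeta^2(u-c)=\tfrac12\nabla\big(\zeta^2(u-c)^2\big)-\zeta\nabla\zeta\,(u-c)^2$ and integrate by parts against $b^{(\al)}$. The first piece pairs $b^{(\al)}$ against a gradient, i.e. against the divergence of $b^{(\al)}$, which for $\al\ge 0$ is a \emph{nonpositive} measure plus (from $b$) zero, so $-\tfrac12\int (u-c)^2\,\zeta^2\,d(\div b^{(\al)})\ge 0$ — it has the right sign to be dropped. (One must justify this integration by parts for $u\in W^1_p$, $p>2$: then $(u-c)^2\zeta^2\in W^1_{p/2}$ with $p/2>1$, and $b^{(\al)}\in L_{2,w}\subset L_s$ for all $s<2$, so the pairing is legitimate; the delta at the origin contributes $+2\pi\al(u(x_0)-c)^2\zeta(x_0)^2\ge 0$ when $x_0$ near $0$, again droppable.) The remaining piece $\int b^{(\al)}\cdot\zeta\nabla\zeta\,(u-c)^2$ is estimated by H\"older in Lorentz spaces: $\|b^{(\al)}\|_{L_{2,w}(B_{2\rho})}$ times $\|\zeta\nabla\zeta\|_{\sim}$ times $\|(u-c)^2\|_{L^{?}}$, and crucially $\|b^{(\al)}\|_{L_{2,w}(B_{2\rho}(x_0))}$ is \emph{small} — this is where we use absolute continuity of the weak-$L_2$ norm only in the part coming from $b$; the singular part $\al x/|x|^2$ has $\|\al x/|x|^2\|_{L_{2,w}(B_{2\rho})}=|\al|\cdot(2\sqrt\pi)$, a fixed constant, but it enters only through the "good" divergence term, not here. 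Combining, we arrive at a Caccioppoli inequality
\[
\int_{B_\rho(x_0)}|\nabla u|^2\,dx \;\le\; \frac{C}{\rho^2}\int_{B_{2\rho}(x_0)}|u-c|^2\,dx \;+\; C\,\|f\|_{L_q(B_{2\rho}(x_0))}^2\,\rho^{2-4/q},
\]
with $C$ depending on $q$, $\Om$, $\al$, $\|b\|_{L_{2,w}(\Om)}$.

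\smallskip\noindent
\textbf{Step 2: Sobolev–Poincar\'e and the reverse H\"older inequality.} Apply the Sobolev–Poincar\'e inequality in the form $\big(\fint_{B_{2\rho}}|u-c|^2\big)^{1/2}\le C\rho\big(\fint_{B_{2\rho}}|\nabla u|^t\big)^{1/t}$ with $t=\frac{2n}{n+2}=1$ for $n=2$ (interior; a suitable variant with $c=0$ at the boundary, using $u|_{\cd\Om}=0$). Dividing by $|B_\rho|$ and using $|B_{2\rho}|\sim|B_\rho|$, Step 1 becomes
\[
\fint_{B_\rho(x_0)}|\nabla u|^2\,dx \;\le\; C\Big(\fint_{B_{2\rho}(x_0)}|\nabla u|\,dx\Big)^2 \;+\; C\,\Big(\fint_{B_{2\rho}(x_0)}|f|^q\,dx\Big)^{2/q},
\]
valid for all balls with $B_{2\rho}(x_0)\cap\Om\neq\emptyset$ (the boundary case absorbed by even reflection after flattening). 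This is precisely a reverse H\"older inequality with a right-hand-side term of higher integrability $2q/2>2$ since $q>2$.

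\smallskip\noindent
\textbf{Step 3: Gehring's lemma and conclusion.} By Gehring's lemma in the form of \cite[Chapter V]{Giaquinta} (Proposition on reverse H\"older inequalities), there exists $p>2$, depending only on the structural constant $C$ and on $n$ and $q$ — hence only on $q$, $\Om$, $\al$, $\|b\|_{L_{2,w}(\Om)}$ — such that $\nabla u\in L_p(\Om)$ with
\[
\Big(\fint_{B_\rho}|\nabla u|^p\Big)^{1/p}\le C\Big(\fint_{B_{2\rho}}|\nabla u|^2\Big)^{1/2}+C\Big(\fint_{B_{2\rho}}|f|^q\Big)^{1/q}.
\]
Covering $\Om$ by finitely many such balls, summing, and invoking the energy estimate \eqref{Energy} from Theorem \ref{Enegy_norm_estimate} (applicable since $\al\ge0$ imposes no extra condition) to bound the $L_2$ norm of $\nabla u$ by $\|f\|_{L_2(\Om)}\le C\|f\|_{L_q(\Om)}$, we obtain $\|\nabla u\|_{L_p(\Om)}\le C\|f\|_{L_q(\Om)}$, and with Poincar\'e, \eqref{Main_Estimate}. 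Since the exponent $p$ here depends only on the stated quantities, and any $p'$--weak solution with $p'\in(2,p]$ satisfies the same reverse H\"older inequality (its gradient is a priori in $L_2$), the estimate holds for every $p$--weak solution as claimed; one may shrink $p$ if necessary so that it is $\le$ the original exponent.

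\smallskip\noindent
\textbf{Main obstacle.} The delicate point is Step 1: making rigorous the integration by parts that exposes the favorable sign of $\div b^{(\al)}$ for $u$ only in $W^1_p$ with $p$ barely above $2$ (so $(u-c)^2\zeta^2$ is only in $W^1_{p/2}$ with $p/2$ barely above $1$), and simultaneously controlling the genuinely singular $\al x/|x|^2$ — which is \emph{not} small in $L_{2,w}$ — by routing it entirely through the good divergence term and the point-mass at the origin, so that the only place a smallness of a weak-$L_2$ norm is needed is for the regular field $b$, where it follows from absolute continuity of the norm on small balls (together with a covering/localization argument that keeps all constants depending only on $\|b\|_{L_{2,w}(\Om)}$ and not on its local absolute continuity modulus in a non-quantitative way — this is handled by the standard dyadic stopping-time structure inside Gehring's lemma). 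Near the boundary one additionally needs the flattening of $\cd\Om$ and an extension/reflection of $u$ and of the drift preserving (up to controllable error) the divergence-free and sign conditions; since $0\in\Om$ is an interior point this boundary analysis does not interact with the singularity.
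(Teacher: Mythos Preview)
Your overall architecture (Caccioppoli $\to$ reverse H\"older $\to$ Gehring, with the sign of $\div b^{(\al)}$ exploited via Proposition~\ref{Q_Form}) matches the paper. The gap is in Step~1, in your treatment of the ``remaining piece''
\[
\int_{B_{2\rho}(x_0)} b^{(\al)}\cdot \zeta\,\nabla\zeta\,(u-c)^2\,dx .
\]
You assert that $\|b^{(\al)}\|_{L_{2,w}(B_{2\rho}(x_0))}$ is \emph{small}, invoking ``absolute continuity of the weak-$L_2$ norm'' for the part coming from $b$, and claim the singular part $\al\,x/|x|^2$ is ``routed entirely through the good divergence term.'' Both claims fail. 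First, weak-$L_2$ norms are \emph{not} absolutely continuous: nothing in \eqref{Assumptions_b_2D} prevents $b$ itself from behaving like $x^\perp/|x|^2$, whose $L_{2,w}$ norm on $B_\rho$ is a fixed positive constant for every $\rho$. Second, the splitting $\nabla u\,\zeta^2(u-c)=\tfrac12\nabla(\zeta^2(u-c)^2)-\zeta\nabla\zeta(u-c)^2$ sends the \emph{full} drift $b^{(\al)}$---including $\al\,x/|x|^2$---into \emph{both} pieces; only the first piece produces the sign-definite $\div b^{(\al)}$ contribution, while the second piece still contains $\al\,x/|x|^2$ and is not small. Consequently the clean Caccioppoli inequality you write at the end of Step~1 (with only $\rho^{-2}\int|u-c|^2$ and an $f$ term on the right) is not obtainable this way, and Step~2 inherits the gap.

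The paper's remedy is not to absorb this term at all, but to make it a \emph{lower-order} contribution in the reverse H\"older inequality. One estimates, with Lorentz duality,
\[
\Big|\int b^{(\al)}\cdot\zeta\nabla\zeta\,(u-c)^2\Big|
\;\le\;\frac{C}{\rho}\,\|b^{(\al)}\|_{L^{2,\infty}(\Om)}\,\|(u-c)^2\|_{L^{2,1}(B_{2\rho})}
\;=\;\frac{C}{\rho}\,\|b^{(\al)}\|_{L_{2,w}(\Om)}\,\|u-c\|_{L^{4,2}(B_{2\rho})}^2,
\]
and then uses $\|u-c\|_{L^{4,2}(B_{2\rho})}\le C\,\rho^{1/10}\,\|u-c\|_{L_5(B_{2\rho})}$ (H\"older in Lorentz scales) together with the Sobolev--Poincar\'e inequality $\|u-c\|_{L_5(B_{2\rho})}\le C\,\|\nabla u\|_{L_{10/7}(B_{2\rho})}$ (in 2D the Sobolev conjugate of $10/7$ is $5$). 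After scaling, the drift term becomes $C(1+\|b^{(\al)}\|_{L_{2,w}})\big(\,\mbox{--}\hskip -1.1em\int_{B_{2\rho}}|\nabla u|^{10/7}\big)^{7/10}$, which sits on the \emph{subcritical} side of the reverse H\"older inequality and requires no smallness or absorption; Gehring's lemma then applies exactly as you describe in Step~3. For the boundary, no flattening or reflection is needed: since $0\in\Om$ is interior, near $\cd\Om$ one has $\div b^{(\al)}=0$ and the paper simply extends $u,f$ by zero and repeats the interior computation using Lemma~\ref{Poincare_inequality} in place of the mean-value Poincar\'e inequality.
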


\begin{proof}
 First we consider  an internal point $x_0\in \Om$. Take
  $R>0$ such that $B_{2R}(x_0)\subset \Om$ and choose a cut-off
function $\zeta\in C_0^\infty(B_{2R}(x_0))$ so  that $\zeta\equiv 1$
on $B_R(x_0)$ and $\|\nabla\zeta\|_{L_\infty(\Bbb R^2)}\le C/R$.
Denote $\bar u = u-(u)_{B_{2R}(x_0)}$. Taking $\eta= \zeta^2\bar u$
 in \eqref{Identity_2D} and using the relation
$$
\mathcal B_\al[u,\zeta^2\bar u] \ = \ \mathcal B_\al[\zeta \bar
u,\zeta \bar u] \ - \ \int\limits_{B_{2R}(x_0)}b^{(\al)} \cdot \zeta
|\bar u |^2 \nabla \zeta~dx
$$
 after routine computations  we arrive at
\begin{equation}
\gathered \int\limits_{B_R(x_0)}|\nabla u|^2~dx \ + \  \mathcal
B_\al[\zeta \bar u,\zeta \bar u] \ \le \ \frac
14\int\limits_{B_{2R}(x_0)}|\nabla u|^2~dx \ - \
\int\limits_{B_{2R}(x_0)}b^{(\al)} \cdot \zeta |\bar u |^2 \nabla
\zeta~dx \ +  \\  + \ C~ \int\limits_{B_{2R}(x_0)}(|f|^2+|\bar u
|^2)~dx.
\endgathered
\label{Reverse_Holder_1}
\end{equation}
Taking into account Proposition \ref{Q_Form} and the identity
$\zeta(0)=1$ we obtain
\begin{equation}
 \mathcal B_\al[\zeta \bar
u,\zeta \bar u]  \ = \  \pi  \al\, | \bar u (0)|^2
\label{B_identity}
\end{equation}
As $\al\ge 0$ we have
\begin{equation}
 \mathcal B_\al[\zeta \bar
u,\zeta \bar u]  \ \ge \ 0 \label{Positive_1}
\end{equation}
and hence we can drop this term from \eqref{Reverse_Holder_1}.
 For the second term on the right-hand side of \eqref{Reverse_Holder_1} we
 get
$$
\gathered  \int\limits_{B_{2R}(x_0)}b^{(\al)} \cdot \zeta |\bar u
|^2 \nabla \zeta~dx \ \le \ \frac
CR~\|b^{(\al)}\|_{L^{2,\infty}}(\Om) \| |\bar u |^2
\|_{L^{2,1}(B_{2R}(x_0))} \  \le \\  \le \ \frac
CR~\|b^{(\al)}\|_{L^{2,\infty}(\Om)} \| \bar u
\|_{L^{4,2}(B_{2R}(x_0))}^2.
\endgathered
$$
Using the H\" older inequality for Lorentz spaces  (see
\cite[Proposition 2.1]{Kozono_Yamazaki}, also cited in \cite{ONeil},
\cite[Section 4.1]{Garfakos}.) we obtain:
$$
\| \bar u \|_{L^{4,2}(B_{2R}(x_0))} \ \le \ C~\| 1\|_{L^{20,
10/3}(B_{2R}(x_0))}~ \| \bar u \|_{L^{5,5}(B_{2R}(x_0))}.
$$
Taking into account the property of the indicator function $\|
\chi_E \|_{L^{p,s}(\Om)} = (p/s)^{1/s}|E\cap \Om|^{1/p}$ we
derive
$$
 \| \bar u
\|_{L^{4,2}(B_{2R}(x_0))} \ \le \ C~|B_{2R}|^{\frac 1{20}}~ \| \bar
u \|_{L_5(B_{2R}(x_0))}.
$$
Hence we arrive at
$$
\int\limits_{B_{2R}(x_0)}b^{(\al)} \cdot \zeta |\bar u |^2 \nabla
\zeta~dx \ \le \ \frac {C}{R^{4/5}}~\|b^{(\al)}\|_{L_{2,w}(\Om)} \|
\bar u \|_{L_5(B_{2R}(x_0))}^2.
$$
Using the imbedding theorem we
estimate
\begin{equation}
\gathered  \int\limits_{B_{2R}(x_0)}b^{(\al)} \cdot \zeta |\bar u
|^2 \nabla \zeta~dx \ \le \ \frac {C
}{R^{4/5}}~\|b^{(\al)}\|_{L_{2,w}(\Om)} \| \nabla u \|_{L_{\frac
{10}7}(B_{2R}(x_0))}^2.
\endgathered
\label{Drift_Term_1}
\end{equation}
Using \eqref{Positive_1} and \eqref{Drift_Term_1} from
\eqref{Reverse_Holder_1} we obtain
\begin{equation}
\gathered \left(  \ \  \  -  \!\!\!\!\!\!\!\!\!\!
\int\limits_{B_R(x_0)}|\nabla u|^2~dx\right)^{\frac 12} \ \le \
\frac 12\,\left(  \ \  \ -   \!\!\!\!\!\!\!\!\!\!\!
\int\limits_{B_{2R}(x_0)}|\nabla u|^2~dx\right)^{\frac  12} + C\,
\left(  \ \  \  -  \!\!\!\!\!\!\!\!\!\!\!
\int\limits_{B_{2R}(x_0)}|f|^2~dx\right)^{\frac  12} \ + \\ +  \ C\,
\Big(1+\|b^{(\al)}\|_{L_{2,w}(\Om)}^{1/2}\Big) \left(  \ \  \ -
 \!\!\!\!\!\!\!\!\!\!\! \int\limits_{B_{2R}(x_0)}|\nabla
u|^{\frac{10}7}~dx\right)^{\frac  7{10}}.
\endgathered
\label{Reverse_1}
\end{equation}

\medskip
 Now we derive similar estimates near the boundary.
Extend $u$ and $f$  by zero outside $\Om$ and denote this extension
by $\tilde u$ and $\tilde f$. Note that as $\cd \Om$ is
$C^1$--smooth there exists $d>0$ depending on $\Om$ such that for
any $R<d$ and any $x\in \cd \Om$ \ $|B_R(x) \setminus \Om |> \frac
14 |B_R|$. Without loss of generality one can assume
$d<\operatorname{dist}\{ 0,\cd \Om\}$. Take $x_0\in \cd \Om$ and
$R<d/2$. Choose a cut-off function $\zeta\in
C_0^\infty(B_{2R}(x_0))$ so that $\zeta\equiv 1$ on $B_R(x_0)$. Then
$\zeta^2 u\in \overset{\circ}{W}{^1_2}(\Om_{2R}(x_0))$ where we
denote $\Om_R(x_0):= B_R(x_0)\cap \Om$. Taking $\eta= \zeta^2 u$
 in \eqref{Identity_2D} after
routine computations  we arrive at
\begin{equation}
\gathered \int\limits_{B_R(x_0)}|\nabla \tilde u|^2~dx \ + \
\mathcal B_\al[  u,\zeta^2  u] \ \le \ \frac
14\int\limits_{B_{2R}(x_0)}|\nabla \tilde u|^2~dx  \ + \ C~
\int\limits_{B_{2R}(x_0)}(|\tilde f|^2+|  \tilde u |^2)~dx.
\endgathered
\label{Reverse_Holder_2}
\end{equation}
As $0\not \in \Om_{2R}(x_0)$ we conclude $\div b^{(\al)}=0$ in
$\mathcal D'(\Om_{2R}(x_0))$ and hence
$$
\mathcal B_\al[u,\zeta^2  u] \ = \  - \
\int\limits_{B_{2R}(x_0)}b^{(\al)} \cdot \zeta | \tilde u |^2 \nabla
\zeta~dx.
$$
Repeating the arguments in the internal case we obtain
$$
\int\limits_{B_{2R}(x_0)}b^{(\al)} \cdot \zeta | \tilde u |^2 \nabla
\zeta~dx \ \le \ \frac {C}{R^{4/5}}~\|b^{(\al)}\|_{L_{2,w}(\Om)} \|
\tilde u \|_{L_5(B_{2R}(x_0))}^2.
$$
As $|\{ \, x\in B_{2R}(x_0): \, \tilde u(x)=0 \, \}| \ge \frac
14|B_{2R}|$ we can apply Lemma \ref{Poincare_inequality} and obtain
$$
\| \tilde u \|_{L_5(B_{2R}(x_0))} \ \le \ C~ \| \nabla \tilde u
\|_{L_{\frac {10}7}(B_{2R}(x_0))} \ = \ C~\| \nabla u \|_{L_{\frac
{10}7}(\Om_{2R}(x_0))}
$$
with some absolute constant $C>0$. Hence from
\eqref{Reverse_Holder_2} we obtain
\begin{equation}
\gathered \left(  \ \  \  -  \!\!\!\!\!\!\!\!\!\!
\int\limits_{\Om_R(x_0)}|\nabla u|^2~dx\right)^{\frac 12} \ \le \
\frac 12\,\left(  \ \  \ -   \!\!\!\!\!\!\!\!\!\!\!
\int\limits_{\Om_{2R}(x_0)}|\nabla u|^2~dx\right)^{\frac  12} + C\,
\left(  \ \  \  -  \!\!\!\!\!\!\!\!\!\!\!
\int\limits_{\Om_{2R}(x_0)}|f|^2~dx\right)^{\frac  12} \ + \\ +  \
C\, \Big(1+\|b^{(\al)}\|_{L_{2,w}(\Om)}^{1/2}\Big) \left(  \ \  \ -
 \!\!\!\!\!\!\!\!\!\!\! \int\limits_{\Om_{2R}(x_0)}|\nabla
u|^{\frac{10}7}~dx\right)^{\frac  7{10}}
\endgathered
\label{Reverse_2}
\end{equation}
which holds for any $x_0\in \cd \Om$ and any $R<d$. Combining the
interior and boundary estimates in
the standard way we obtain \eqref{Reverse_2} for any $x_0\in \bar
\Om$ and any $R<d/2$. This estimate is the reverse H\" older
inequality for $\nabla u$, see \cite[Chapter V]{Giaquinta}. Hence
there exists $p\in (2,q]$ such that   $\nabla u\in L_p(\Om)$ and the
following estimate holds:
$$
\| \nabla u\|_{L_p(\Om)} \ \le \ C~(\| \nabla u\|_{L_2(\Om)} + \|
f\|_{L_q(\Om)})
$$
with some constant $C>0$ depending only on $q$, $\Om$, $\al$ and $\|
b\|_{L_{2,w}(\Om)}$. Combining this estimate with \eqref{Energy} we
obtain \eqref{Main_Estimate}.
\end{proof}

The estimate of the H\" older norm of solutions to the problem
\eqref{Equation_2D} is the crucial step in our proof of the higher
integrability of $\nabla u$ for $\al<0$. This estimate was
originally obtained in \cite{CSTY_1}, \cite{CSTY_2}, see also
\cite{NU}. As the mentioned papers formally deal with a bit stronger
assumption  on the divergence-free part of the drift $b$ (which in
2D-case corresponds to $b\in L_2(\Om)$ instead of $b\in
L_{2,w}(\Om)$ in our case) we outline the proof of this result.

\begin{theorem} \label{Holder_Continuity_2}
Assume $\al< 0$,  $b$ satisfies \eqref{Assumptions_b_2D} and $q>2$.
Then there exists $\mu\in (0,1)$ depending only on $q$, $\Om$, $\al$
and $\| b\|_{L_{2,w}(\Om)}$ such that for any $f\in L_q(\Om)$, any
$p>2$  and any $p$-weak solution $u$  to the problem
\eqref{Equation_2D} the estimate
\begin{equation}
\| u\|_{C^\mu(\bar \Om)} \ \le \ C~\big(\|f\|_{L_q(\Om)} +\|
u\|_{L_\infty(\Om)}\big), \label{Holder_estimate}
\end{equation}
holds with some constant $C$ depending only on $\al$, $\Om$, $q$ and
$\|
 b\|_{L_{2,w}(\Om)}$.
\end{theorem}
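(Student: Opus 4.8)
The plan is to derive a Morrey/Campanato-type decay estimate for $\int_{\Om_\rho(x_0)}|\nabla u|^2\,dx$ uniformly in $x_0\in\bar\Om$, which by the Morrey-Campanato theorem yields H\"older continuity. The two regimes to distinguish are \emph{balls away from the origin} (including near $\cd\Om$), where $b^{(\al)}$ is a genuine $L_{2,w}$ vector field with zero distributional divergence, and \emph{balls centered at or near the origin}, where the singular part $-\al x/|x|^2$ must be handled directly. Away from the origin the argument is essentially the one already used in the proof of Theorem~\ref{Higher_Integrability_1}: test \eqref{Identity_2D} with $\eta=\zeta^2(u-k)$ (interior) or $\eta=\zeta^2 u$ (boundary), use that $\div b^{(\al)}=0$ there to move the drift onto $\nabla\zeta$, estimate the resulting term by the H\"older inequality in Lorentz spaces together with $\|b^{(\al)}\|_{L_{2,w}}$ and Sobolev/Poincar\'e, and absorb. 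This gives a Caccioppoli inequality with a small gain, from which a decay estimate $\int_{B_\rho(x_0)}|\nabla u|^2\le C(\rho/R)^{2\mu}\big(\int_{B_R(x_0)}|\nabla u|^2 + R^{2\mu}\|f\|_{L_q}^2\big)$ follows by a standard iteration lemma (e.g. \cite[Chapter III]{Giaquinta}), the $R^{2\mu}\|f\|_{L_q}^2$ term coming from H\"older applied to $\int|f||\nabla u|$ on small balls.

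The heart of the matter is balls $B_R$ around the origin. Here I would exploit the \emph{sign} of the singular drift: since $\al<0$, Proposition~\ref{Q_Form} gives $\mathcal B_\al[\zeta(u-u(0)),\zeta(u-u(0))]=2\pi\al\,|u(0)-u(0)|^2=0$ when we center the cut-off at $0$ — more precisely, testing with $\eta=\zeta^2(u-u(0))$ and writing $\mathcal B_\al[u,\zeta^2\bar u]=\mathcal B_\al[\zeta\bar u,\zeta\bar u]-\int b^{(\al)}\cdot\zeta|\bar u|^2\nabla\zeta\,dx$ with $\bar u=u-u(0)$, the quadratic term $\mathcal B_\al[\zeta\bar u,\zeta\bar u]=2\pi\al|\bar u(0)|^2=0$ drops out for free. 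What remains is again a lower-order term $\int_{B_{2R}} b^{(\al)}\cdot\zeta|\bar u|^2\nabla\zeta\,dx$; its divergence-free part is controlled as before by $\|b\|_{L_{2,w}}$, while for the singular part $\al x/|x|^2$ one uses that $|x/|x|^2|\in L_{2,w}(B_{2R})$ with norm bounded independently of $R$, so the same Lorentz-H\"older estimate applies. This is exactly the point where the hypothesis $u(0)=0$ in the final application (via $\bar u$) and the use of $\esssup|u|$ on the right-hand side of \eqref{Holder_estimate} enter: $\|\bar u\|_{L_\infty(B_{2R})}\le 2\|u\|_{L_\infty(\Om)}$, and the oscillation of $u$ on small balls is what we are trying to control. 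One gets a Caccioppoli-type inequality on $B_R$ with a small coefficient in front of the $\int_{B_{2R}}|\nabla u|^2$ term, hence the same decay.

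Combining the interior, boundary, and origin-centered decay estimates in the usual covering argument yields $\int_{B_\rho(x_0)}|\nabla u|^2\,dx\le C\rho^{2\mu}\big(\|f\|_{L_q(\Om)}^2+\|u\|_{L_\infty(\Om)}^2\big)$ for all $x_0\in\bar\Om$ and $\rho<d$, with $\mu\in(0,1)$ and $C$ depending only on $\al,\Om,q,\|b\|_{L_{2,w}(\Om)}$; the Morrey-Campanato embedding then gives \eqref{Holder_estimate}. The main obstacle I anticipate is the bookkeeping at the origin: one must verify that the constant in the Lorentz-space estimate for the singular term $\int b^{(\al)}\cdot\zeta|\bar u|^2\nabla\zeta$ does not degenerate as $R\to0$ (it does not, since $\|x/|x|^2\|_{L_{2,w}(B_{2R})}$ is scale-invariant), and to make sure the smallness needed for the absorption/iteration is uniform — this is where the dependence of $\mu$ on $\|b\|_{L_{2,w}(\Om)}$ genuinely comes from. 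A secondary technical point is that $u(0)$ is well-defined because $p$-weak solutions are continuous by the embedding $\WWW{^1_p}\hookrightarrow C^{1-2/p}$ noted after the definition, so all the pointwise manipulations above are legitimate.
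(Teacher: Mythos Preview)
Your approach is genuinely different from the paper's, and it has a real gap at the origin.

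The paper does \emph{not} go through a Campanato/Morrey decay of $\int_{B_\rho(x_0)}|\nabla u|^2$. Away from the origin it simply quotes the known H\"older estimate for divergence-free drifts in $L_{2,w}$ (references \cite{NU}, \cite{Fri_Vicol}, \cite{IKR}) to obtain the oscillation bound \eqref{Together_1}. Near the origin it proves an oscillation decay estimate directly by De~Giorgi--Nash--Moser methods: a Caccioppoli inequality for $(u-k)_+$ (Theorem~\ref{Caccioppolli}), a Moser-type $L^\infty$ bound (Theorem~\ref{Maximum estimate}), a density estimate exploiting the identity $2\pi|\al|v(0)\eta(0)=-\int v(\Delta\eta+b^{(\al)}\!\cdot\nabla\eta)-\int g\cdot\nabla\eta$ (Theorem~\ref{Small_value_subset_has_density_less_then_1}), a De~Giorgi level-set iteration (Theorem~\ref{Level_increasing_lemma}), and finally the oscillation reduction $\operatorname{osc}_{B_{R/2}}u\le(1-\dl/2)\operatorname{osc}_{B_{2R}}u+\ldots$ (Theorems~\ref{Oscillation_Theorem}--\ref{Oscillation_Theorem_Rescaled}). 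The two regimes are then patched by the bridging argument at the end of the proof of Theorem~\ref{Holder_Continuity_2}.

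The gap in your plan is the step ``Caccioppoli $\Rightarrow$ decay'' for balls centered at the origin. With $\bar u=u-u(0)$ you are right that $\mathcal B_\al[\zeta\bar u,\zeta\bar u]=0$, but you then lose the Poincar\'e--Sobolev step: $\|\bar u\|_{L_5(B_{2R})}$ is \emph{not} controlled by $\|\nabla u\|_{L_{10/7}(B_{2R})}$, because Poincar\'e requires subtracting the mean over the ball, not the value at a single point. If instead you bound $|\bar u|$ by $\operatorname{osc}_{B_{2R}}u$ or by $2\|u\|_{L_\infty(\Om)}$ as you suggest, the terms $\frac{C}{R^2}\int_{B_{2R}}|\bar u|^2$ and $\frac{C}{R^{4/5}}\|b^{(\al)}\|_{L_{2,w}}\|\bar u\|_{L_5}^2$ both become $O(\|u\|_{L_\infty}^2)$, a constant independent of $R$. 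Iterating $\int_{B_R}|\nabla u|^2\le\frac14\int_{B_{2R}}|\nabla u|^2+C\|u\|_{L_\infty}^2+C\|f\|_{L_q}^2$ then yields only $\int_{B_R}|\nabla u|^2\le C$, not $\le CR^{2\mu}$; no reverse H\"older inequality and no Campanato decay follow. (Taking instead $\bar u=u-(u)_{B_{2R}}$ restores Poincar\'e but makes $\mathcal B_\al[\zeta\bar u,\zeta\bar u]=2\pi\al\,|u(0)-(u)_{B_{2R}}|^2<0$, which goes to the wrong side and produces the same uncontrolled term.) This is exactly why the paper resorts to the full De~Giorgi machinery of Section~\ref{Holder_Continuity Section}: for a \emph{critical} drift the perturbation/Campanato route does not close, and one needs the level-set argument together with the structural identity at $x=0$ to force oscillation decay.
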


\begin{proof}
For any $\al\in \Bbb R$ the drift $b^{(\al)}$ is divergence free in
$\Om\setminus \{0\}$:
$$
\div b^{(\al)} =0 \quad \mbox{in} \quad
\mathcal D'(\Om\setminus \{0\}).
$$
It is well-known that solutions of elliptic equations with the
divergence free drifts $b^{(\al)}\in L_{2,w}(\Om\setminus B_R)$  are
H\" older continuous, see, for example, \cite{NU}, \cite{Fri_Vicol},
\cite{IKR}. Hence for any $R>0$ such that $B_{2R}\Subset \Om$ we
have $u\in C^\mu(\overline{\Om\setminus B_R})$ and for any $x_0\in
\bar \Om \setminus B_R$ and any $0<\rho < R$ the following estimate
holds:
\begin{equation}
\operatorname{osc}\limits_{\Om\cap B_\rho(x_0)} u \ \le \
C\Big(\frac \rho R\Big)^\mu \left(\operatorname{osc}\limits_{\Om
\cap B_R(x_0)} u \ + \ \| f\|_{L_q(\Om \cap B_{R}(x_0))}\right)
\label{Together_1}
\end{equation}
with some constant $\mu\in (0,1)$ and $C>0$ depending only on $q$,
$\Om$, $\al$ and $\|b\|_{L_{2,w}(\Om)}$.

On the other hand, for any $0<\rho <R$ we have the estimate
\begin{equation}
\operatorname{osc}\limits_{  B_\rho} u \ \le \ C\Big(\frac \rho
R\Big)^\mu \left(\operatorname{osc}\limits_{ B_R } u \ + \ \|
f\|_{L_q(  B_{R} )}\right). \label{Together_2}
\end{equation}
Though this estimate is not new (see, for example \cite[Theorem
10.7]{Tsai_Book}), for the reader's convenience we present its proof
 in the next section, see Theorem
\ref{Oscillation_Estimate} below. Now take arbitrary $x_0\in \Om$
and denote $r:=|x_0|$. For $r<R$ we obtain $B_r(x_0)\subset
B_{2r}\subset B_{2R}$ and
$$
\gathered  \operatorname{osc}_{B_\rho(x_0)} u \ \le \ C~\Big(\frac
\rho r\Big)^\mu \, \Big(\operatorname{osc}_{B_r(x_0)} u \ + \
\|f\|_{L_q(B_r(x_0))}\Big) \ \le \\ \le \ C~\Big(\frac \rho
r\Big)^\mu \, \Big( \operatorname{osc}_{B_{2r}} u \ + \
\|f\|_{L_q(B_{2r})}\Big) \ \le \ C~\Big(\frac \rho r\Big)^\mu \,
\Big(\frac { r}{ R}\Big)^\mu \Big( \operatorname{osc}_{B_{2R}} u \ +
\ \|f\|_{L_q(B_{2R})}\Big)\ \le
\\ \le \ C~\Big(\frac {\rho}R\Big)^\mu ~\Big(\|u\|_{L_\infty(\Om)} \ + \
\|f\|_{L_q(\Om)}\Big).
\endgathered
$$
The last estimate together with \eqref{Together_1},
\eqref{Together_2} implies \eqref{Holder_estimate}.
\end{proof}

The last theorem of the present section is the analogue of Theorem
\ref{Higher_Integrability_1} in the case of $\al<0$.

\begin{theorem} \label{Higher_Integrability_2}
Assume $\al< 0$,  $b$ satisfies \eqref{Assumptions_b_2D} and $q>2$.
Then there exists $p>2$ depending only on $q$, $\Om$, $\al$ and $\|
b\|_{L_{2,w}(\Om)}$ such that for any $f\in L_q(\Om)$ and any
$p$-weak solution $u$  to the problem \eqref{Equation_2D} satisfying
the additional assumption $u(0)=0$
  the estimate \eqref{Main_Estimate}
 holds  with some constant $C>0$ depending only on $\Om$, $q$, $\al$ and $\|
 b\|_{L_{2,w}(\Om)}$. Moreover, without loss of generality one can assume
 the  constant  $C$ in \eqref{Main_Estimate} is a
 non-decreasing function with respect to $\| b\|_{L_{2,w}(\Om)}$.
\end{theorem}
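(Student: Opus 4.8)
The plan is to re-run the reverse H\"older / Gehring scheme from the proof of Theorem \ref{Higher_Integrability_1}, modifying the argument only on the interior balls that surround the origin. Fix $\rho_0>0$ with $B_{2\rho_0}\Subset\Om$ and, shrinking $\rho_0$ if necessary, assume $2\rho_0<\dist\{0,\cd\Om\}$; set $R_0:=\min\{d/2,\rho_0/4\}$ with $d$ as in the proof of Theorem \ref{Higher_Integrability_1}. For $x_0\in\bar\Om$ and $0<R<R_0$ with $0\notin B_{2R}(x_0)$ --- in particular for every ball meeting $\cd\Om$ --- the cut-off functions used in the proof of Theorem \ref{Higher_Integrability_1} are supported away from the origin (and $\div b^{(\al)}=0$ on $\Om_{2R}(x_0)$), so by Proposition \ref{Q_Form} the form $\mathcal B_\al$ contributes no point mass and, regardless of the sign of $\al$, that proof produces \eqref{Reverse_1} and \eqref{Reverse_2} verbatim. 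Hence the only case to redo is that of an interior ball $B_{2R}(x_0)$ containing the origin; for such a ball $|x_0|<2R<2R_0$, so $B_{2R}(x_0)\subset B_{\rho_0}\Subset\Om$.

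On such a ball I would test \eqref{Identity_2D} with $\eta=\zeta^2u$ (rather than with $\zeta^2\bar u$), where $\zeta\in C_0^\infty(B_{2R}(x_0))$, $\zeta\equiv1$ on $B_R(x_0)$, $\|\nabla\zeta\|_\infty\le C/R$. Since $u(0)=0$, Proposition \ref{Q_Form} gives $\mathcal B_\al[\zeta u,\zeta u]=2\pi\al|\zeta(0)u(0)|^2=0$, whence $\mathcal B_\al[u,\zeta^2u]=-\int_{B_{2R}(x_0)}b^{(\al)}\cdot\zeta|u|^2\nabla\zeta\,dx$, and the usual Caccioppoli manipulation yields $\int_{B_R(x_0)}|\nabla u|^2\,dx\le\frac14\int_{B_{2R}(x_0)}|\nabla u|^2\,dx-\int_{B_{2R}(x_0)}b^{(\al)}\cdot\zeta|u|^2\nabla\zeta\,dx+C\int_{B_{2R}(x_0)}(|f|^2+|u|^2)\,dx$. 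The drift term is bounded exactly as in the proof of Theorem \ref{Higher_Integrability_1} by $\frac{C}{R^{4/5}}\|b^{(\al)}\|_{L_{2,w}(\Om)}\|u\|_{L_5(B_{2R}(x_0))}^2$; but now --- this is where $u(0)=0$ enters again --- since $0\in B_{2R}(x_0)$ one has $\|u\|_{L_\infty(B_{2R}(x_0))}\le\osc_{B_{2R}(x_0)}u$, and Theorem \ref{Holder_Continuity_2} together with Theorem \ref{L_infinity_estimate} (applicable because $u(0)=0$) gives $\osc_{B_{2R}(x_0)}u\le CR^\mu\|f\|_{L_q(\Om)}$ with $\mu=\mu(q,\Om,\al,\|b\|_{L_{2,w}(\Om)})\in(0,1)$. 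Hence $\|u\|_{L_5(B_{2R}(x_0))}^2\le CR^{4/5}R^{2\mu}\|f\|_{L_q(\Om)}^2$ and $\int_{B_{2R}(x_0)}|u|^2\,dx\le CR^{2+2\mu}\|f\|_{L_q(\Om)}^2$, so after dividing by $|B_R|\asymp R^2$,
\[
\frac1{|B_R|}\int_{B_R(x_0)}|\nabla u|^2\,dx\ \le\ \frac14\,\frac1{|B_{2R}|}\int_{B_{2R}(x_0)}|\nabla u|^2\,dx\ +\ C\,\frac1{|B_{2R}|}\int_{B_{2R}(x_0)}|f|^2\,dx\ +\ C\,R^{2\mu-2}\|f\|_{L_q(\Om)}^2 ,
\]
with $C$ depending on $\al$, $\Om$, $q$ and $\|b\|_{L_{2,w}(\Om)}$.

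The key point --- and the main obstacle --- is that, although the last term has a singular $R$--scaling incompatible with a naive reverse H\"older inequality, it is dominated by the $B_{2R}(x_0)$--average of a fixed function lying in $L_{s_0}(\Om)$ for some $s_0>1$. Indeed, $x\mapsto|x|^{2\mu-2}$ is radially non-increasing and $|x|<|x_0|+2R<4R$ on $B_{2R}(x_0)$ whenever $0\in B_{2R}(x_0)$, so $\frac1{|B_{2R}(x_0)|}\int_{B_{2R}(x_0)}|x|^{2\mu-2}\,dx\ge(4R)^{2\mu-2}=cR^{2\mu-2}$, and thus $R^{2\mu-2}\|f\|_{L_q(\Om)}^2\le C\|f\|_{L_q(\Om)}^2\frac1{|B_{2R}(x_0)|}\int_{B_{2R}(x_0)}|x|^{2\mu-2}\,dx$. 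Setting $h:=|f|^2+C_1\|f\|_{L_q(\Om)}^2|x|^{2\mu-2}$ (with $C_1$ absorbing the $\|b\|_{L_{2,w}(\Om)}$--dependence above), the estimates on balls not containing the origin and the one just obtained combine into the single reverse H\"older inequality
\[
\frac1{|B_R|}\int_{B_R(x_0)}|\nabla u|^2\,dx\ \le\ \frac34\,\frac1{|B_{2R}|}\int_{B_{2R}(x_0)}|\nabla u|^2\,dx\ +\ C\Big(\frac1{|B_{2R}|}\int_{B_{2R}(x_0)}|\nabla u|^{10/7}\,dx\Big)^{7/5}\ +\ C\,\frac1{|B_{2R}|}\int_{B_{2R}(x_0)}h\,dx ,
\]
valid for every $x_0\in\bar\Om$ and $0<R<R_0$, where $h\in L_{s_0}(\Om)$ for any $s_0<\min\{q/2,\tfrac1{1-\mu}\}$; here $\tfrac1{1-\mu}>1$ precisely because $\mu>0$, which is why the H\"older estimate of Section \ref{Holder_Continuity Section} is indispensable. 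By the reverse H\"older lemma of Gehring (see \cite[Chapter V]{Giaquinta}) there is $p\in(2,\min\{q,2s_0\})$ with $\nabla u\in L_p(\Om)$ and $\|\nabla u\|_{L_p(\Om)}\le C(\|\nabla u\|_{L_2(\Om)}+\|h\|_{L_{p/2}(\Om)}^{1/2})$.

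To finish, since $p\le q$ one has $\|f\|_{L_p(\Om)}\le C\|f\|_{L_q(\Om)}$, and since $p<\tfrac2{1-\mu}$ one has $|x|^{\mu-1}\in L_p(\Om)$; hence $\|h\|_{L_{p/2}(\Om)}^{1/2}\le\|f\|_{L_p(\Om)}+C\|f\|_{L_q(\Om)}\||x|^{\mu-1}\|_{L_p(\Om)}\le C\|f\|_{L_q(\Om)}$. Combining this with the energy estimate $\|\nabla u\|_{L_2(\Om)}\le C\|f\|_{L_2(\Om)}\le C\|f\|_{L_q(\Om)}$ from Theorem \ref{Enegy_norm_estimate} gives $\|\nabla u\|_{L_p(\Om)}\le C\|f\|_{L_q(\Om)}$, and together with $\|u\|_{L_p(\Om)}\le C\|u\|_{L_\infty(\Om)}\le C\|f\|_{L_q(\Om)}$ from Theorem \ref{L_infinity_estimate} this is \eqref{Main_Estimate}. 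All constants and $p$ depend only on $q$, $\Om$, $\al$ and $\|b\|_{L_{2,w}(\Om)}$, through $\mu$ and the reverse H\"older constants; tracking these dependencies one sees they are controlled by any upper bound for $\|b\|_{L_{2,w}(\Om)}$, so after replacing $C(\|b\|_{L_{2,w}(\Om)})$ by $t\mapsto\sup_{0\le\tau\le t}C(\tau)$ the majorant may be taken non-decreasing in $\|b\|_{L_{2,w}(\Om)}$.
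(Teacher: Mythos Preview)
Your proof is correct, but it takes a considerably more hands-on route than the paper. The paper does not re-run the Gehring scheme at all: instead it observes that, since $u(0)=0$ and $u\in C^\mu(\bar\Om)$ by Theorem~\ref{Holder_Continuity_2}, one may integrate by parts to obtain
\[
\int_\Om\frac{x}{|x|^2}\cdot\nabla u\,\eta\,dx \ = \ -\int_\Om u\,\frac{x}{|x|^2}\cdot\nabla\eta\,dx,\qquad \forall\,\eta\in C_0^\infty(\Om),
\]
so that $u$ is in fact a $p$--weak solution of $-\Delta u+b\cdot\nabla u=-\div g$ with the \emph{divergence-free} drift $b$ alone and right-hand side $g=f-\al\,\frac{x}{|x|^2}\,u$. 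The H\"older bound $|u(x)|\le C|x|^\mu\|f\|_{L_q(\Om)}$ (via Theorems~\ref{Holder_Continuity_2} and~\ref{L_infinity_estimate}) gives $g\in L_{q_0}(\Om)$ for any $q_0\in\big(2,\min\{q,\tfrac{2}{1-\mu}\}\big)$, and Theorem~\ref{Higher_Integrability_1} (the case $\al=0$) applied to this new equation yields \eqref{Main_Estimate} in one stroke. Both arguments hinge on the H\"older estimate of Section~\ref{Holder_Continuity Section}, but the paper uses it to push the singular part of the drift to the right-hand side in divergence form and then invokes the already-proven $\al\ge0$ result, whereas you use it pointwise inside the Caccioppoli inequality and exhibit the neat device of absorbing the anomalous $R^{2\mu-2}$ term into the $B_{2R}(x_0)$--average of the fixed weight $|x|^{2\mu-2}\in L_{s_0}(\Om)$, $s_0>1$. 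The paper's reduction is markedly shorter; your approach is self-contained at the level of the reverse H\"older argument and makes explicit which balls require the extra input from Section~\ref{Holder_Continuity Section}.
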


\begin{proof} Assume $\al<0$ and $u$ is a $p$-weak solution to the problem
\eqref{Equation_2D}. From Theorem \ref{Holder_Continuity_2} we
obtain $u\in C^\mu(\bar \Om)$ with some $\mu\in (0,1)$ and hence
integrating by parts and taking into account $u(0)=0$ we obtain
\begin{equation}
\int\limits_\Om \frac{x}{|x|^2} \cdot \nabla u \, \eta~dx \ = \ -
\int\limits_\Om u  \, \frac{x}{|x|^2} \cdot \nabla  \eta~dx, \qquad
\forall~\eta\in C_0^\infty(\Om). \label{RHS_divergent}
\end{equation}
The last relation means that $u$ is a $p$-weak solution to the
problem
\begin{equation}
\left\{ \quad \gathered  -\Delta u + b \cdot \nabla u \ = \ -\div
g \qquad\mbox{in}\quad \Om, \\
u|_{\cd \Om} \ = \ 0, \qquad u(0)=0, \qquad \qquad
\endgathered\right.
\label{Equation_2D_modified}
\end{equation}
where
$$
g \ = \ f - \al \, \frac{x}{|x|^2} \, u.
$$
Note that $g\in L_{q_0}(\Om)$ for any $q_0\in \left(2, \min \left\{
q, \frac{2}{1-\mu}\right\}\right)$ and
\begin{equation}
\| g\|_{L_{q_0}(\Om)} \ \le \ C   \left(\| f\|_{L_q(\Om)} + \,
|\al|\, \| u\|_{C^\mu(\bar \Om)}\right) \label{g_estimate}
\end{equation}
with a constant $C>0$ depending only on $\mu$ and $\Om$. Applying
Theorem \ref{Higher_Integrability_1} to the problem
\eqref{Equation_2D_modified} we conclude there exist constants $p>2$
and $C>0$ depending only on $q_0$, $\Om$ and $\|b \|_{L_{2,w}(\Om)}$
such that
$$
\| u\|_{W^1_p(\Om)} \ \le \ C\, \| g\|_{L_{q_0}(\Om)}.
$$
Taking into account \eqref{g_estimate} and \eqref{Holder_estimate}
we obtain the estimate \eqref{Main_Estimate}.
\end{proof}

\newpage
\section{H\"older Continuity of $p$--Weak Solutions} \label{Holder_Continuity
Section}
\setcounter{equation}{0}

\bigskip
\medskip
The main result of this section is the following estimate:

\medskip

\begin{proposition}\label{Oscillation_Estimate}
Assume $\al< 0$,  $b$ satisfies \eqref{Assumptions_b_2D},
$b^{(\al)}$ is defined in \eqref{Full_Drift}. Assume  $q>2$, $f\in
L_q(\Om)$ and $B_{2R}\Subset \Om$. Assume $p>2$ and $u$ is a
$p$-weak solution to
\begin{equation}
-\Delta u + b^{(\al)} \cdot \nabla u \ = \ -\div f \qquad\mbox{in}
\quad \Om \label{Equation}
\end{equation}
such that $u(0)=0$. Then there exist $\mu\in (0,1)$ and $C>0$
depending only on $q$, $\al$, $\|b\|_{L_{2,w}(\Om)}$ such that for
any $0<\rho <R$ the following estimate holds:
\begin{equation}
\operatorname{osc}\limits_{  B_\rho} u \ \le \ C\Big(\frac \rho
R\Big)^\mu \left(\operatorname{osc}\limits_{ B_R } u \ + \ \|
f\|_{L_q(  B_{R} )}\right). \label{Oscillation_Estimate_Main}
\end{equation}
\end{proposition}

 \medskip

Our proof is based on  the ideas of \cite[Section 10.3]{Tsai_Book},
see also \cite{NU} where a similar result is obtained. We split the
proof of Theorem \ref{Oscillation_Estimate} onto several steps. We
start from the Caccioppolli-type inequality:

\medskip
\begin{lemma}\label{Caccioppolli} Assume $\al< 0$,  $b$ satisfies
\eqref{Assumptions_b_2D} in $\Om=B_2$ and $p>2$. Then for any $f\in
L_p(B_2)$, any  $p$-weak solution $u$ to the equation
\eqref{Equation} in $B_2$ such that $u(0)\le 0$, and any $\zeta\in
C_0^\infty(B_2)$ the following estimate holds:
\begin{equation}
\gathered  \int\limits_{B_2} \zeta^2|\nabla u_+|^2dx \ \le \ C
~\int\limits_{ B_2 } |   u_+|^2 \Big(|\nabla \zeta|^2 +
|b^{(\al)}|\, |\nabla \zeta|\Big) \, dx \ + \   C ~\int\limits_{ B_2
} \zeta^2 |   f|^2 \, dx.
\endgathered \label{Caccioppolli_Inequality}
\end{equation}
Here $u_+:=\max\{ u, 0\}$ and $C>0$ is some absolute constant.
\end{lemma}

\begin{proof} We take $\eta:=\zeta^2 u_+$ in \eqref{Identity_2D}.
The convective term satisfies  the identity
$$
\mathcal B_\al[u,\zeta^2 u_+] \ = \ \mathcal B_\al[ \zeta u_+ ,
\zeta u_+] \ - \ \int\limits_{B_2} \zeta \, |u_+|^2 \, b^{(\al)}
\cdot \nabla \zeta \, dx.
$$
As $u(0)\le 0$ we obtain $u_+(0)=0$ and hence from
\eqref{Quadratic_Form} we conclude
$$
\mathcal B_\al[ \zeta u_+ , \zeta u_+] \ = \ \pi \al \,
\zeta^2(0)|u_+(0)|^2 \ = \ 0.
$$
Now the result follows by the H\" older and Young inequalities.
\end{proof}

Now we proceed with the following maximum estimate:

\medskip
\begin{lemma}\label{Maximum estimate} Assume $\al<0$, $b$ satisfies \eqref{Assumptions_b_2D} in $\Om=B$,  $q>2$ and $f\in L_q(B)$.
 Then there is a constant $C>0$ depending only on $q$, $\al$ and $\|
b\|_{L_{2,w}(B)}$ such that for any $p>2$ and any  $p$-weak solution
$u$ of the equation \eqref{Equation}  in $B$ satisfying $u(0)\le 0$
the following estimate holds:
\begin{equation}
\sup\limits_{B_{1/2}}  u_+ \ \le \ C~\Big(
 -\!\!\!\!\!\!\int\limits_{B} |  u_+|^{q_1}~dx\Big)^{1/q_1} \, + \
C \, \| f\|_{L_q(B)}, \label{Maximum estimate_1}
\end{equation}
where $q_1:=\max\left\{ 5, \frac{2q}{q-2}\right\}$.
\end{lemma}

\begin{proof}  The proof of \eqref{Maximum estimate_1} follows by application of the standard Moser iteration technique.
Let $\beta \ge 0$ be arbitrary and denote $$\bar u := u_++k, \qquad
z := \bar u^{\frac{\be+2}2}, \qquad w \ := \ {\bar u}^{\frac
{\beta+2} 2}-k^{\frac{\be+2}2},$$ where we take $k:= \|
f\|_{L_q(B)}$ if $f\not\equiv 0$ and $k=0$ otherwise. Assume
$\zeta\in C_0^\infty(B)$ is an arbitrary cut-off function and take $
\eta  =  \zeta^2 \bar u^{\frac \be 2}w$ in \eqref{Identity_2D}. We
have
$$
\int\limits_B \nabla u \cdot \nabla\eta~dx \ \ge \ \frac 2{\be +2}
~\int\limits_B \zeta^2 |\nabla w|^2 dx \   - \ \frac 4{\be +2} ~
\int\limits_B |w \nabla\zeta  \cdot \zeta\nabla w| \, dx,
$$
$$
\mathcal B_\al [u,\eta] \ = \ \frac 2{\be +2}~ \mathcal B_\al[w,
\zeta^2 w],
$$
$$
\int\limits_B f\cdot \nabla \eta~dx \ \le \ 2~\int\limits_B \zeta \,
|f| \, \bar u^{\frac \be2} \,  \Big( \zeta \, |\nabla w|  +  |\nabla
\zeta|\,   w \Big)~dx.
$$
As $u(0)\le 0$ we obtain $w(0)=0$. Combining all above inequalities
from \eqref{Identity_2D} similar to \eqref{Caccioppolli_Inequality}
 we obtain
$$
\gathered  \int\limits_{B } |\zeta \nabla   w |^2dx \ \le \ C
~\int\limits_{ B  }     w^2 \Big(|\nabla \zeta|^2 + |b^{(\al)}|\,
|\nabla \zeta|\Big) \, dx \ + \   C \, (\be +2)^2 ~\int\limits_{ B }
\zeta^2 |  f|^2 \bar u^\be \, dx .
\endgathered
$$
Note that $k\le \bar u$, $w\le z$ and $\nabla z=\nabla w$. So, in
the case   $f\not\equiv 0$ from the last inequality we obtain
$$
\gathered  \int\limits_{B } |\zeta \nabla   z|^2dx \ \le \ C
~\int\limits_{ B  }     z^2 \Big(|\nabla \zeta|^2 + |b^{(\al)}|\,
|\nabla \zeta|\Big) \, dx \ + \   C\, (\be +2)^2 ~\int\limits_{ B  }
\zeta^2 \Big| \frac fk \Big|^2  z^2\, dx .
\endgathered
$$
Recall  that $\big\| \frac
fk\big\|_{L_q(B)}=1$. Hence the last term can be estimated by the
H\" older inequality:
$$
\int\limits_{ B  } \zeta^2 \Big| \frac fk \Big|^2  z^2\, dx  \ \le \
\Big\| \frac fk \Big\|^2_{L_q(B)} \|\zeta z\|_{L_{\frac{2q}{q-2}}(B)
}^2 \   \le \ C~ \|\zeta   z\|_{L_{q_1}(B) }^2,
$$
and   we arrive at
$$
\gathered  \int\limits_{B } |\nabla  (\zeta  z)|^2dx \ \le \ C \,
\int\limits_{ B  }     z^2 \Big(|\nabla \zeta|^2 + |b^{(\al)}|\,
|\nabla \zeta| \Big) \, dx \ + \ C\, (\be +2)^2 \| \zeta
z\|_{L_{q_1}(B) }^2.
\endgathered
$$
 Applying the
imbedding theorem
$$
\| \zeta z\|_{L_{2q_1}(B )} \ \le \ C \,  \| \nabla (\zeta
z)\|_{L_2(B )},
$$
taking  arbitrary $\frac 12\le r<R\le 1$, and choosing $\zeta\in
C_0^\infty(B_R)$ so that $\zeta\equiv 1$ on $B_r$, $|\nabla
\zeta|\le C/(R-r)$,   we get
$$
\gathered \int\limits_{ B  }     z^2 |b^{(\al)}|\, |\nabla \zeta| \,
dx \ \le  \ \frac{C}{R-r} \,
\|b^{(\al)}\|_{L_{2,w}(B_R)}\|z\|_{L^{4,2}(B_R)}^2
\endgathered
$$
 and
$$
\gathered \int\limits_{ B  }     z^2 |\nabla \zeta|^2~dx \ \le \ \|
|\nabla\zeta|^2\|_{L_{2,w}(B_R\setminus B_r)} \|
z^2\|_{L^{2,1}(B_R)} \ \le \ \frac{C}{(R-r)^{3/2} }\,
\|z\|_{L^{4,2}(B_R)}^2.
\endgathered
$$
Combining these estimates we obtain
$$
\gathered  \| z\|_{L_{2q_1}(B_r)} \ \le \ \frac{C}{\sqrt{R-r}}
\left(   \|b^{(\al)}\|_{L_{2,w}(B)}^{\frac 12} +(R-r)^{-\frac1 4}
\right)~ \| z\|_{L^{4,2}(B_R)} \ + \  C\, (\be +2) \, \|
z\|_{L_{q_1}(B_R) }.
\endgathered
$$
Using   the H\" older inequality for Lorentz norms  and taking into
account  $R\in (\frac 12, 1]$ we conclude
$$
  \| z\|_{L^{4,2}(B_R)} \ \le \ C\, R^{2(1/4-1/{q_1})} \, \|
z\|_{L_{q_1}(B_R)} \ \le \ C \, \| z\|_{L_{q_1}(B_R)}.
$$
Applying  the Cauchy inequality  we derive the estimate
$$
\| z\|_{L_{2q_1}(B_r)} \ \le \ C \, \left( \be +2  +  K(R-r)^{-1}
\right)~ \| z\|_{L_{q_1}(B_R)}, \qquad K:=
\|b^{(\al)}\|_{L_{2,w}(B)}^{\frac 12}+1,
$$
which is valid for any $\frac 12\le r<R\le 1$.
 Hence we have
\begin{equation}
\|  \bar u\|_{L_{2 \gamma q_1 }(B_r)} \ \le \ C^{\frac{1}\gamma}
\left(2\ga   +  K(R-r)^{-1} \right)^{\frac 1{ \gamma}}~ \| \bar
u\|_{L_{\gamma q_1}(B_R)} \label{Iterate}
\end{equation}
with an arbitrary $\gamma\ge 1$, $\gamma:=\frac{\beta+2}2$. Denote
$s_0=q_1$, $s_m:=2 s_{m-1} $,  and denote also $R_m= \frac 12
+\frac1{2^{m+1}}$. Taking $r=R_m$, $R=R_{m-1}$,
$\gamma=\frac{s_{m-1}}{q_1}$   in \eqref{Iterate} we obtain
$$
\gathered  \| \bar u\|_{L_{s_m }(B_{R_m})}  \ \le \  \exp (c_0 {m
}{2^{-m } } )  ( 1  + K
  )^{ 2^{1-m} }~ \|
\bar u\|_{L_{s_{m-1}}(B_{R_{m-1}})}.
\endgathered
$$
Iterating this inequality we get
$$
\gathered  \| \bar u\|_{L_{s_m }(B_{1/2})}  \ \le \  C_0 \, ( 1  + K
)^2 \, \| \bar u\|_{L_{q_1}(B)}.
\endgathered
$$
with some positive constant $C_0$ independent on $m$. Taking $m\to
\infty$ we arrive at  \eqref{Maximum estimate_1}. 
\end{proof}

\medskip
\begin{lemma} \label{Thin_set_lemma}
Assume $\al<0$, $b$ satisfies \eqref{Assumptions_b_2D} in $\Om=B$,
$q>2$ and $f\in L_q(B)$.
 Then there are constants $C>0$ and $\ep_0\in (0,1)$ depending only on $q$, $\al$ and $\|
b\|_{L_{2,w}(B)}$ such that for any $p>2$ and any  $p$-weak solution
$u$ of the equation \eqref{Equation}  in $B$ such that $u(0)\le 0$
if
$$
|\{ \, x\in B: \, u(x)>  0 \, \}| \ \le \  \ep_0\, |B|
$$
then
$$
\sup\limits_{B_{1/2}} u_+ \ \le \ \frac 12~ \sup\limits_{B } u_+ \ +
\  C\, \| f\|_{L_q(B )}.
$$
\end{lemma}

\medskip
\begin{proof} From Lemma \ref{Maximum estimate} we obtain
\eqref{Maximum estimate_1} with $q_1:=\max\left\{ 5,
\frac{2q}{q-2}\right\}$. Then we have
$$
 \gathered  C~\Big( -\!\!\!\!\!\!\int\limits_{B }
|u_+|^{q_1}dx\Big)^{1/q_1} \  \le  \ C~\sup\limits_{B } u_+
~\frac{|\{ \, x\in B : u(x)> 0 \, \}|^{1/q_1}}{|B |^{1/q_1}} \ \le \
C\, \ep_0^{1/q_1} \, \sup\limits_{B } u_+.
\endgathered
$$
Choosing $C\ep_0^{1/q_1} =\frac 12$ we obtain the required
statement. \
\end{proof}

Next we prove that a small value subset has density less then 1.

\begin{lemma}     \label{Small_value_subset_has_density_less_then_1}
Assume $\al<0$, $b$ satisfies \eqref{Assumptions_b_2D} in $\Om=B$.
 Then for any $p>2$, any $g\in L_p(B )$ and any $p$-weak solution
$v$ to the problem
\begin{equation} -\Delta v + b^{(\al)} \cdot
\nabla v \ = \ -\div g \qquad\mbox{in} \quad B  \label{Equation_v}
\end{equation}
satisfying the assumptions
\begin{equation}
0\le v\le 2 \quad \mbox{in} \quad B , \qquad v(0)\ge 1,
\label{Assumptions_v}
\end{equation}
if
\begin{equation}
\|g\|_{L_2(B)} \ \le \    c_\star \, |\al| \label{Assumption_g},
\qquad \mbox{where} \qquad c_\star:=\frac{\sqrt{\pi}}4,
\end{equation}
then  the following estimate holds:
\begin{equation}
 |\{ \, x\in B: \, v(x)  >\la  \, \}| \ > \ \ga.
\label{Nonzero_density}
\end{equation}
Here $\la$, $\ga \in (0,1)$ are some  constants depending on $\al$
and $\| b\|_{L_{3/2}(B)}$ in the explicit way described in the proof
below.
\end{lemma}

\begin{proof}  Assume there exist $g$ and $v$
satisfying \eqref{Equation_v}, \eqref{Assumptions_v},
\eqref{Assumption_g} such that
\begin{equation}
   |\{ \, x\in B : \, v(x)> \la \,
\}| \ \le \ \ga.
 \label{le_lambda}
\end{equation}
Integrating by parts in \eqref{Identity_2D} for any $\eta\in
C_0^\infty(B)$  we obtain
\begin{equation}
2\pi |\al| v(0)\eta(0) \ = \ - \int\limits_{B } v\Big(\Delta \eta +
b^{(\al)}\cdot \nabla \eta  \Big)~dx \ - \ \int\limits_{B } g\cdot
\nabla \eta~dx. \label{v(0)_estimate}
\end{equation}
Note that $v(0) \ge 1$. Choose  $\eta $  so that $\eta = 1$ on
$B_{1/2}$,  $ \|\nabla\eta\|_{L_2(B)}\le 4\sqrt{\pi}$ and $\|
\eta\|_{C^2(\bar B)}\le c_{\star\star}$ where $c_{\star\star}>0$ is
some sufficiently large absolute constant. Then from
\eqref{Assumption_g} and the H\" older inequality we obtain
$$
\Big| ~\int\limits_{B } g\cdot \nabla \eta ~dx~\Big|  \ \le \ \pi
|\al|.
$$
Hence from \eqref{v(0)_estimate} we obtain
\begin{equation}
\pi |\al|   \ \le \  \Big| \, \int\limits_{B } v\Big(\Delta \eta  +
b^{(\al)}\cdot \nabla \eta   \Big)~dx ~\Big|. \label{together}
\end{equation}
Denote
$$
B [v> \la] := \{~x\in B : \, v(x)> \la~\}, \qquad B [v\le \la] :=
\{~x\in B : \, v(x)\le \la~\}.
$$
From the H\" older inequality we obtain
$$
 \int\limits_{B [v> \la]} v\Big(\Delta \eta  +
b^{(\al)}\cdot \nabla \eta   \Big)~dx \  \le   \   \|
v\|_{L_\infty(B )} \, \| \eta \|_{C^2(\bar B )} \,
\Big(1+\|b^{(\al)}\|_{L_{\frac 32}(B )} \Big) |B [v>\la]|^{\frac
13}.
$$
Taking into account \eqref{Assumptions_v}, \eqref{le_lambda}  and
$\|\eta\|_{C^2(B)}\le c_{\star\star}$  we conclude
$$
\Big| \int\limits_{B [v> \la]} v\Big(\Delta \eta  + b^{(\al)}\cdot
\nabla \eta  \Big)~dx \, \Big|  \  \le   \  2c_{\star\star} \,
\Big(1+\|b^{(\al)}\|_{L_{\frac 32}(B )} \Big) \ga^{\frac 13}.
$$
  On the other hand
$$
\gathered  \Big|\int\limits_{B [v\le \la]} v\Big(\Delta \eta  +
b^{(\al)}\cdot \nabla \eta  \Big)~dx \Big| \  \le \ c_{\star\star}
\, \Big(1+\|b^{(\al)}\|_{L_{1}(B )} \Big) \, \la.
\endgathered
$$
Finally, we obtain
$$
|\al|   \  \le   \  2c_{\star\star} \,
\Big(1+\|b^{(\al)}\|_{L_{\frac 32}(B )} \Big)  \ga^{\frac 13} \ + \
c_{\star\star} \, \Big(1+\|b^{(\al)}\|_{L_{1}(B )} \Big) \, \la.
$$
This inequality leads to  the contradiction if we fix values of
$\la$, $\ga \in (0,1) $ so that
$$
 2c_{\star\star} \,
\Big(1+\|b^{(\al)}\|_{L_{\frac 32}(B )} \Big) \ga^{\frac 13} \ + \
 c_{\star\star} \, \Big(1+\|b^{(\al)}\|_{L_1(B )}  \Big) \, \la  \ < \ \pi
|\al|.
$$

\end{proof}

\begin{lemma} \label{Level_increasing_lemma}
Assume $\al<0$, $b$ satisfies \eqref{Assumptions_b_2D} in $\Om=B_2$
and let $\la$, $\ga \in (0,1)$ be the constants  defined in Lemma
\ref{Small_value_subset_has_density_less_then_1}. Then for any
$\ep>0$ there exists $s_0\in \Bbb N$ depending only on $\ep$, $\la$,
$\al$, $\|b \|_{L_{2,w}(B)}$ such that for any $p>2$, any $g\in
L_p(B )$ and any $p$-weak solution $v$ to the problem
\eqref{Equation_v} in $B_2$ satisfying \eqref{Assumptions_v} in
$B_2$ either
\begin{equation}
  |\{ \, x\in B: \, v(x) \, \le \, 2^{-s_0} \la \, \}| \ \le \
\ep \, |B| \label{First_case}
\end{equation}
or
\begin{equation}
  2^{- s_0} \la \ \le \ \frac{1}{c_\star |\al|} \, \|
g\|_{L_2(B_2)}.\label{Second_case}
\end{equation}
\end{lemma}

\begin{proof}
Assume $s_0\in \Bbb N$ is arbitrary and   $g$ and  $v$ satisfy
\eqref{Equation_v}, \eqref{Assumptions_v}. If
$$
\|g\|_{L_2(B_2)} \ \ge \    c_\star  |\al|
$$
then \eqref{Second_case}   holds as $\la\in (0,1)$. Assume now
\eqref{Assumption_g} is valid. Then \eqref{Nonzero_density} is true.

Let $k\in (0,1)$ be arbitrary. Denote $u:=k-v$. Then $u$ satisfies
\eqref{Equation} and $u(0)=k-v(0)\le 0$. Then for any $\zeta\in
C_0^\infty(B_2)$  from \eqref{Caccioppolli_Inequality} we obtain
$$
 \int\limits_{B_2}  \zeta^2|\nabla(v-k)_-|^2\,
dx \ \le \ C  \int\limits_{B_2}  (v-k)_- ^2\Big( |\nabla \zeta|^2 +
|b^{(\al)}|~|\nabla\zeta| \Big)dx + C \, \int\limits_{B_2} |g|^2\,
dx.
$$
Choosing $\zeta$ so that $\zeta\equiv 1$ in $B$, $|\nabla\zeta|\le
4$,  for any $k\in (0,1)$ we obtain
\begin{equation}
  \int\limits_{B }  |\nabla(v-k)_-|^2\,
dx  \ \le \ C\,  \int\limits_{B_2}  (v-k)_- ^2\Big( 1 +  |b^{(\al)}|
\Big)dx + C \, \int\limits_{B_2} |g|^2\, dx.
\label{Caccioppolli_Inequality_1}
\end{equation}
 For $ s=0,1,2, \ldots$ we denote $k_s:= 2^{-s}\la$ and
$$
 A_s:=\{ \, x\in B : \, v(x)< k_s \, \} = \{ \, x\in B : \, (v-k_s)_-(x)>0 \,
 \}.
$$
Applying the De Giorgi inequality
(see \cite[Chapter II \S 3, Lemma 3.9]{LU}) we obtain
$$
(k_{s}-k_{s+1}) \, | A_{s+1} |^{1/2} \ \le \  \frac{ C }{| B
\setminus  A_s    |} \ \int\limits_{A_s\setminus A_{s+1}} |\nabla
v(x)|~dx.
$$
As $B \setminus A_s\supset B \setminus A_0$ and $|B \setminus
A_0|\ge  \ga|B |$ using the H\" older inequality  we obtain
$$
 2^{-s-1} \la \, |A_{s+1} |^{1/2} \ \le \ \frac{C}{ \ga} \, |A_s\setminus
A_{s+1}|^{1/2} \, \Big( \int\limits_{A_s\setminus A_{s+1}} |\nabla
v|^2~dx\Big)^{1/2}.
$$
Hence
$$
 |A_{s+1} | \ \le \ \frac{C }{\la^2\ga^2 } \, 2^{2s+2} \, |A_s\setminus
A_{s+1}|~\int\limits_{B} |\nabla (v-k_s)_-|^2~dx.
$$
Using \eqref{Caccioppolli_Inequality_1} with $k=k_s$ we arrive at
$$
 |A_{s+1} | \ \le \ C\, \frac{ 2^{2s} }{ \la^2\ga^2  } \, \, |A_s\setminus
A_{s+1}| \int\limits_{B_2} \left[ (v-k_s)_- ^2\big( 1 + |b^{(\al)}|
\big) +    |g|^2\right]\, dx.
$$
As $v\ge 0$ in $B_2$ we have  $(v-k_s)_-\le k_s=2^{-s}\la $ in $B_2$
and
$$
 |A_{s+1} |  \ \le \ \frac{C}{ \ga^2   }  \, |A_s\setminus
A_{s+1}| \int\limits_{B_2} \left(    1 +   |b^{(\al)}|   +
\frac{2^{2s}  }{\la^2  } |g|^2\right)\, dx.
$$
Taking the sum from $s=0$ to $s=s_0-1$ and using inclusions  $
B\supset A_0\supset A_1\supset \ldots \supset A_{s_0}$ we obtain
\begin{equation}
 s_0 |A_{s_0}|  \ \le \ \frac{C_0}{ \ga^2 } \, \Big( 1 +
 \|b^{(\al)}\|_{L_1(B_2)} + \frac{2^{2s_0}}{\la^2  } \|
g\|^2_{L_2(B_2)}\Big)
 \label{A_0}
 \end{equation}
with some absolute positive constant $C_0$.
 Now let us fix the value $s_0\in \Bbb N$ so that
\begin{equation}
\frac{C_0}{\ga^2 s_0 } \Big( 1+ c_\star^2 |\al|^2  +
\|b^{(\al)}\|_{L_1(B_2)} \Big) \ \le \ \ep \, |B|
 \label{Choice_s_0}
 \end{equation}
and consider two cases. If
$$
\frac{2^{ s_0}}{\la   } \, \| g\|_{L_2(B_2)}  \ \ge \ c_\star|\al|
$$
then  \eqref{Second_case} follows.
 Otherwise
$$
\frac{2^{ s_0}}{\la  } \,  \| g\|_{L_2(B_2)}  \ \le \ c_\star |\al|
$$
and from \eqref{A_0}, \eqref{Choice_s_0} we obtain
\eqref{First_case}.
\end{proof}

\medskip
\begin{lemma}\label{Lower_bound}
Assume $\al<0$, $b$ satisfies \eqref{Assumptions_b_2D} in $\Om=B_2$,
$q>2$ and $g\in L_q(B_2)$.  Let $\la$, $\ga\in (0,1)$ be the
constants defined in Lemma
\ref{Small_value_subset_has_density_less_then_1},  $\ep_0\in (0,1)$
be defined in Lemma \ref{Thin_set_lemma} and $s_0\in \Bbb N$ be the
number from Lemma \ref{Level_increasing_lemma} defined in
\eqref{Choice_s_0} for $\ep=\ep_0$. Denote $\dl:=2^{-s_0}\la$,
$\dl\in (0,1)$. Then for any $p>2$ and any $p$-weak solution $v$ to
the problem \eqref{Equation_v} in $B_2$ satisfying
\eqref{Assumptions_v} in $B_2$ either
\begin{equation}
c_\star \, |\al|  \, \dl \ \le \  \| g\|_{L_2(B_2)}
\label{Second_case_2}
\end{equation}
or
\begin{equation}
   \inf\limits_{B_{1/2}} v  \ +   \ c \, \| g\|_{L_q(B)}\
\ge \ \frac \dl2 . \label{First_case_2}
\end{equation}
Here $c>0$ is some   constant depending only on $q$, $\al$ and $\|
b\|_{L_{2,w}(B_2)}$.
\end{lemma}

\begin{proof} If $g$ satisfies \eqref{Second_case} then
\eqref{Second_case_2} follows. Assume now
$$
\| g\|_{L_2(B )}  \ < \ c_\star  |\al|  \, \dl.
$$
Then applying   Lemma \ref{Level_increasing_lemma} we obtain
$$
 |\{ \, x\in B: \, v(x)\le \dl \, \}| \ \le \
\ep_0|B|.
$$
Denote $u= \dl-v$. Then $u(0)= \dl-v(0)<0$ and $u$ satisfies all
assumptions of  Lemma \ref{Thin_set_lemma} with $f=-g$.  Hence we
obtain
$$
\sup\limits_{B_{1/2} } (\dl-v)_+ \ \le \ \frac 12~\sup\limits_{B }
(\dl-v)_+ \ + \ c\, \| g\|_{L_q(B)}\ \le  \ \frac \dl2 \ + \ c \, \|
g\|_{L_q(B)}
$$
which gives
$$
\dl \ - \ \inf\limits_{B_{1/2}} v \ \le \ \frac \dl2 \ + \ c\, \|
g\|_{L_q(B )}.
$$
Lemma \ref{Lower_bound} is proved. \end{proof}

\begin{lemma}\label{Oscillation_Theorem}
Assume $\al<0$, $b$ satisfies \eqref{Assumptions_b_2D} in $\Om=B_2$,
$q>2$ and $f\in L_q(B)$.
 There exist  constants
$\dl\in (0,1)$ and $c>0$ depending only on $q$, $\al$ and $\|
b\|_{L_{2,w}(B_2)}$ such that    for any
 $p>2$ and any  a $p$-weak solution $u$ to
the equation \eqref{Equation} in $B_{2}$ satisfying $u(0)=0$ the
following inequality holds:
\begin{equation}
\operatorname{osc}_{B_{1/2}} u \ \le  \ \Big( \, 1-\frac \dl2 \,
\Big)~ \operatorname{osc}_{B_2 } u  \ + \ c\,  \| f\|_{L_q(B_2 )}.
\label{Osc}
\end{equation}
\end{lemma}

\medskip

\begin{proof} For any $r\in (0,2)$ we denote
$$
\gathered  M( r) := \sup\limits_{B_{ r}} u, \qquad m( r) :=
\inf\limits_{B_{ r}} u, \qquad \om(r) := M(r)-m(r),
\endgathered
$$
Note that \ $m(2)\le 0\le M(2)$. \ Denote $k_0:=\frac {m(2)+M(2)}2$,
and define   $v$ and $g$ so that
$$
\gathered  v(x)  := 2~\frac{u(x)-m(2)}{\om(2)}, \qquad g(x) :=
\ \ \frac{2f(x)}{\om(2)}, \qquad \mbox{if} \qquad  k_0< 0, \\
v(x):= 2~\frac{M(2) - u(x)}{\om(2)}, \qquad
g(x):=-\frac{2f(x)}{\om(2)}, \qquad \mbox{if} \qquad  k_0\ge 0.
\endgathered
$$
Then $v$ is a $p$-weak solution to the equation \eqref{Equation_v}
in $B_2$ satisfying the additional conditions \eqref{Assumptions_v}
in $B_2$. From Lemma \ref{Lower_bound}  we conclude
$$
\gathered \mbox{either} \quad \dl \ \le \ c\, \| g\|_{L_2(B_2)}
\qquad \mbox{or} \quad  \inf\limits_{B_{1/2}} v +   c\, \|
g\|_{L_q(B)}\ \ge  \ \frac \dl2.
\endgathered
$$
In the first case from the H\" older inequality we obtain
$$
\om(2) \ \le \ \frac{ c}{\dl} \, \| f\|_{L_2(B_2)} \ \le \ c(\dl, q)
\, \| f\|_{L_q(B_2)}
$$
and hence \eqref{Osc} follows. In the second case if $k_0<0$ we have
$$
m(1/2) \ - \ m(2)  \ + \    2c\, \| f\|_{L_q(B)} \ \ge  \ \frac \dl2
\, \om(2)
$$
and hence
$$
\Big(1-\frac \dl2\Big) \, \om(2) \ +  \ 2c \, \| f\|_{L_q(B)} \ \ge
\ \om(1/2)
$$
and we obtain \eqref{Osc}. If $k_0\ge 0$ then
$$
M(2) \ - \ M(1/2) \ + \   2c\, \| f\|_{L_q(B)}\ \ge  \ \frac \dl2 \,
\om(2)
$$
and we again obtain \eqref{Osc}.
\end{proof}

\begin{lemma}\label{Oscillation_Theorem_Rescaled}
Assume $\al< 0$,  $b$ satisfies \eqref{Assumptions_b_2D} in $\Om$.
Assume $q>2$, $f\in L_q(\Om)$ and $B_{2R}\Subset \Om$.
 Then there exist  constants
$\dl\in (0,1)$ and $c>0$ depending only on $q$, $\al$ and $\|
b\|_{L_{2,w}(B_{2R})}$ such that for any
 $p>2$
and any $p$-weak solution $u$ to the equation \eqref{Equation} in
$\Om$  satisfying $u(0)=0$   the following inequality holds:
\begin{equation}
\operatorname{osc}_{B_{R/2}} u \ \le  \ \Big( \, 1-\frac \dl2 \,
\Big)~ \operatorname{osc}_{B_{2R}} u  \ + \ c\, R^{1-\frac 2q} \, \|
f\|_{L_q(B_{2R})}. \label{Osc_R}
\end{equation}
\end{lemma}

\medskip
\begin{proof}
For $x\in B_2$ we denote
$$
u^R(x) = u(Rx), \qquad b^{(\al)}_R(x) = Rb^{(\al)}(Rx), \qquad
f^R(x)=  Rf(Rx).
$$
Then $u^R$ is a solution to
$$
-\Delta u^R + b^{(\al)}_R \cdot \nabla u^R \ = \ -\div f^R
\qquad\mbox{in} \quad B_2
$$
and, moreover,
$$
\| b^{(\al)}_R\|_{L_{2,w}(B_2)} \ = \ \|
b^{(\al)}\|_{L_{2,w}(B_{2R})}, \qquad \| f^R\|_{L_q(B_2)} \ = \
R^{1-\frac 2q} \, \| f\|_{L_q(B_{2R})}.
$$
From Lemma \ref{Oscillation_Theorem} we obtain
$$
\operatorname{osc}_{B_{1/2}} u^R \ \le  \ \Big( \, 1-\frac \dl2 \,
\Big)~ \operatorname{osc}_{B_2 } u^R  \ + \ c\,  \| f^R\|_{L_q(B_2
)}
$$
which implies \eqref{Osc_R}.
\end{proof}

Now  the inequality \eqref{Oscillation_Estimate_Main}  follows  from
\eqref{Osc_R} by the standard iteration technique. Proposition
\ref{Oscillation_Estimate} is proved.

\newpage
\section{Proof of Main Results}\label{Proof of Main Results}
\setcounter{equation}{0}

\bigskip
We start with the proof of Theorem \ref{Theorem_1}. To construct a
$p$-weak solution to the problem \eqref{Equation_2D} we approximate
our drift $b^{(\al)}$ by smooth functions using Lemma
\ref{Approximation_result}.  In the case  $\al\ge 0$ our
construction shows that  $p$-weak solutions are always approximative
solutions in the sense they can be obtained as limits of smooth
solutions of the equations with smooth drifts. Note that this is not
true  if $\al<0$ as the approximative solution must satisfy the
maximum principle (locally), but the radial solutions constructed in
the case of $\al<0$ in Section \ref{Intro} do not possess this
property.

\begin{proposition}\label{Positive_T1}
Assume $\al\ge 0$, $b \in C^\infty(\bar \Om)$ satisfies $\div b=0$
in $\Om$  and $q>2$. Assume $f\in L_q(\Om)$ and $\ep>0$. Define
$$
b^{(\al)}_\ep \ := \ b  - \al\, \frac{ x}{|x|^2+\ep^2}.
$$
Then    there exists a unique $q$--weak solution $u  $ to the
problem
\begin{equation}
\left\{ \quad \gathered  -\Delta u  + b^{(\al)}_\ep\cdot \nabla u  \ = \ -\div f  \qquad\mbox{in}\quad \Om, \\
u |_{\cd \Om} \ = \ 0. \qquad \qquad
\endgathered\right.
\label{Equation_2D_approximate}
\end{equation}
Moreover, there exists  $p\in (2,q]$ depending only on $q$, $\Om$,
$\al$ and $\| b\|_{L_{2, w} (\Om)}$ such that  the following
estimate holds:
\begin{equation}
\| u \|_{W^1_p(\Om)} \ \le \ C\, \| f\|_{L_q(\Om)}.
\label{Higher_Integrability_approx}
\end{equation}
Here  $C>0$ is a  constant which can be chosen to be  a
non-decreasing function of $q$, $\Om$, $\al$ and $\| b\|_{L_{2, w}
(\Om)}$.
\end{proposition}

\begin{proof}
The existence and uniqueness of the weak  solution $u \in
W^1_2(\Om)$ for  equation \eqref{Equation_2D_approximate} with the
smooth drift $b^{(\al)}_\ep$ is well-known, see, for example,
\cite{LU}. Moreover, from $L_q$--theory for equations with smooth
coefficients we obtain $u^\ep\in W^1_q(\Om)$.  Note that as $\al\ge
0$ we have $\div b^{(\al)}_\ep \le 0$ in $\Om$. Hence the estimate
\eqref{Higher_Integrability_approx} follows by the arguments similar
to the  proof of Theorem \ref{Higher_Integrability_1}.
\end{proof}

Now we state an   approximation result in weak Lebesgue space:

\medskip
\begin{proposition}\label{Approximation_result}
Let $\Om \subset \Bbb R^2$ be a bounded simply connected domain of
class $C^1$. Assume $b\in L_{2,w}(\Om)$, $\div b=0$ in $\mathcal
D'(\Om)$. Then there exist $b^\ep\in C^\infty(\bar \Om)$, $\div
b^\ep=0$ in $\mathcal D'(\Om)$, such that
\begin{itemize}
\item[{\rm 1)}] $\| b^\ep\|_{L_{2,w}(\Om)} \, \le \, C \, \|
b\|_{L_{2,w}(\Om)}$,
\item[{\rm 2)}] $b^\ep\to b$ a.e. in $\Om$ as $\ep\to 0$.
\end{itemize}
Here the constant $C>0$ depends only on $\Om$.

\end{proposition}

\begin{proof}
Assume $p\in (1,+\infty)$ and denote
$$
J_p(\Om) \ := \ \{ ~u\in L_p(\Om; \Bbb R^2):~\div u = 0 \ \mbox{ in
} \
 \mathcal D'(\Om)~\}.
$$
Consider any bounded simply connected domain $\Om_0\subset \Bbb R^2$
such that $\Om\Subset \Om_0$. Then there exists an extension
operator $T: J_p(\Om)\to J_p(\Bbb R^2)$ such that for all $b \in
J_p(\Om)$ the function $\tilde b:=Tb$ has the following properties
$$
\gathered  \tilde b \equiv 0 \quad \mbox{in} \quad \Bbb R^2\setminus
\Om_0, \qquad \tilde b|_\Om=b, \\ \div \tilde b = 0
\quad\mbox{in}\quad \mathcal D'(\Bbb R^2), \qquad \| \tilde
b\|_{L_p(\Bbb R^2)}\le c_p \|  b\|_{L_p(\Om)}.
\endgathered
$$
The existence of the extension operator $T$ is shown, for example,
in \cite[\S3.1, Theorem 3.1]{Gi_Ra}, and   $L_p$-estimates for this
operator follow from the corresponding estimates for weak solutions
to the Neumann problem involved into the construction of the
operator $T$. Indeed, denote by $E: W^1_{p'}(\Om_0\setminus \Om)\to
W^1_{p'}(\Om_0)$ any bounded linear extension operator, i.e.
$(E\eta)\big|_{\Om_0\setminus \Om}=\eta$. Then the formula
$$
l(\eta) \ :=  \ -\int\limits_\Om b\cdot \nabla \tilde\eta~dx, \qquad
\tilde \eta:=E\eta, \qquad \eta\in W^1_{p'}(\Om_0\setminus \Om)
$$
determines a bounded linear functional on the  Banach space
$W^1_{p'}(\Om_0\setminus \Om)$. Moreover, the condition $\div b =0$
in $\mathcal D'(\Om)$ implies that the functional $l$ is independent
of the extension operator $E$. Hence there exists a unique solution
$\ph\in W^1_p(\Om_0\setminus \Om)$ of the Neumann problem
$$
\gathered
 \int\limits_{\Om_0\setminus \Om}
\nabla \ph\cdot \nabla \eta~dx \ = \ l(\eta), \qquad \forall~\eta\in
W^1_{p'}(\Om_0\setminus \Om), \\
\int\limits_{\Om_0\setminus \Om} \ph~dx = 0 , \qquad \|\nabla
\ph\|_{L_p(\Om_0\setminus \Om)} \ \le \ c~\| b\|_{L_p(\Om)},
\endgathered
$$
and one can take $\tilde b:=b$ in $\Om$ and $\tilde b:= \nabla \ph$
in $\Om_0\setminus \Om$ and then  extend $\tilde b$ by zero to
 the whole $\Bbb R^2$. It is
easy to see that $\tilde b\in J_p(\Bbb R^2)$ and $Tb:=\tilde b$
satisfies all necessary properties.

From the Marcinkiewicz interpolation theorem (see \cite[Theorem
1.4.19]{Garfakos}) we conclude that $T$ is also bounded as an
operator from $L_{2,w}(\Om)$ into $L_{2,w}(\Bbb R^2)$:
$$
\| \tilde b\|_{L_{2,w}(\Bbb R^2)}\le c_{2,w} \|  b\|_{L_{2,w}(\Om)},
\qquad\forall~b\in L_{2,w}(\Om): \qquad \div b=0\quad\mbox{in}\quad
\mathcal D'(\Om).
$$
Let $\om_\ep$ be the standard Sobolev kernel
$\om_\ep(x)=\ep^{-2}\om(x/\ep)$, $\om \in C_0^\infty(B)$,
$\int_{\Bbb R^2}\om(x)~dx=1$, and denote
$$\tilde b^\ep:=\om_\ep* \tilde b, \qquad b^\ep:=\tilde
b^\ep|_\Om.$$ Then for $b\in L_{2,w}(\Om)$ we have
$$
\tilde b^\ep\in C_0^\infty(\Bbb R^2), \qquad  \div \tilde b^\ep = 0
\quad\mbox{in}\quad \mathcal D'(\Bbb R^2), \qquad   \tilde b^\ep\to
  \tilde b \quad \mbox{in}\quad L_p(\Bbb R^2), \qquad \forall~p\in
  [1,2).
$$
Moreover, as the convolution operator $T_\ep \tilde b:=  \om_\ep*
\tilde b \equiv \tilde b^\ep$ is bounded in $L_p(\Bbb R^2)$ for any
$p\in (1,+\infty)$ with $\| T_\ep\|_{L_{p}\to L_{p}}=1$, i.e.
$$
\|  \tilde b^\ep\|_{L_p(\Bbb R^2)}  \ \le  \ \|\tilde b\|_{L_p(\Bbb
R^2)}, \qquad \forall~p\in (1,+\infty),
$$
 from the Marcinkiewicz interpolation theorem  (see \cite[Theorem 1.4.19]{Garfakos}) we conclude that $T_\ep$ is also
bounded as an operator from $L_{2,w}(\Bbb R^2)$ into $L_{2,w}(\Bbb
R^2)$ and the norm $\| T_\ep\|_{L_{2,w}\to L_{2,w}}$ is independent
of $\ep$:
$$
\exists ~M>0: \qquad \| \tilde b^\ep\|_{L_{2,w}(\Bbb R^2)} \ \le \
M~ \| \tilde b\|_{L_{2,w}(\Bbb R^2)}.
 $$
Hence $b^\ep$ possesses all required properties.
\end{proof}

Now we can prove  Theorem \ref{Theorem_1}:

\begin{proof}  Let $p>2$ be the exponent  defined in Proposition \ref{Positive_T1}.
Note that $p$ depends only on $q$, $\Om$, $\al$ and $\| b\|_{L_{2,
w} (\Om)} $. From Proposition \ref{Approximation_result}  we obtain
the existence of a sequence $b_\ep\in C^\infty(\bar \Om)$ such that
\begin{equation}
\div b_\ep = 0 \quad \mbox{in} \quad \Om , \qquad  b_\ep \to b \quad
\mbox{a.e. in} \quad \Om, \qquad \| b_\ep\|_{L_{2,w}(\Om)} \, \le \,
C\, \| b\|_{L_{2,w}(\Om)}. \label{Properties_approx}
\end{equation}
Denote
\begin{equation}
b^{(\al)}_\ep \ := \ b_{\ep}  - \al\, \frac{ x}{|x|^2+\ep^2}
\label{b_alpha_def}
\end{equation}
and note that
\begin{equation}
\| b^{(\al)}_\ep\|_{L_{2, w} (\Om)} \ \le \ c\, \big(|\al|+\|
b\|_{L_{2, w} (\Om)}\big). \label{b_alpha}
\end{equation}
From Proposition  \ref{Positive_T1} we conclude that for any $\ep>0$
there exists  a $q$--weak solution  $u^\ep$ to the problem
\eqref{Equation_2D_approximate} (with $b_\ep^{(\al)}$ defined in
\eqref{b_alpha_def}).  Moreover, as the constant $C$ in
\eqref{Higher_Integrability_approx} is a non-decreasing function of
$\|b\|_{L_{2,w}(\Om)}$ from \eqref{b_alpha} we conclude that the
estimate
\begin{equation}
\| u^\ep \|_{W^1_p(\Om)} \ \le \ C\, \| f\|_{L_q(\Om)}
\label{Higher_Integrability_approx1}
\end{equation}
holds with some constant $C>0$ independent on $\ep$.
 From
\eqref{Higher_Integrability_approx1} we obtain existence of $u\in
W^1_p(\Om)$ such that for some subsequence $u^\ep$ we have
\begin{equation}
u^\ep \rightharpoonup u \quad \mbox{in} \quad W^1_p(\Om).
\label{Weak_conv}
\end{equation}
 On the other hand, from \eqref{Properties_approx}
  we conclude there exists a subsequence such that
\begin{equation}
b^{(\al)}_\ep \ \to \ b^{(\al)} \quad \mbox{in} \quad L_{p'}(\Om),
\qquad p'=\frac{p}{p-1}. \label{Strong_conv}
\end{equation}
Hence for any $\eta\in C_0^\infty(\Om)$ from \eqref{Weak_conv} and
\eqref{Strong_conv} we obtain
$$
\int\limits_\Om b^{(\al)}_\ep \cdot \nabla u^\ep \eta\, dx \ \to \
\int\limits_\Om b^{(\al)}  \cdot \nabla u \, \eta\, dx
$$
as $\ep\to 0$. Passing to the limit in the equations for $u^\ep$ (in
a weak form) we obtain \eqref{Identity_2D}. Hence $u$ is a $p$-weak
solution to the problem \eqref{Equation_2D}. The uniqueness of
$p$-weak solutions follows from \eqref{Energy}.
\end{proof}

Now we present a proof of  Theorem \ref{Theorem_2}. We split it into
several steps.

\begin{lemma}\label{4.1}
Assume  $\al<0$. Then there exists $p_1>2$ depending only on $\al$
and $\Om$ such that  for any   $B_\ep\Subset \Om$ and any  $f^\ep
\in
 C_0^\infty(\Om\setminus B_\ep)$ there exists a unique $p_1$--weak solution $w^\ep\in
 \overset{\circ}{W}{^1_{p_1}}(\Om)$  to the problem
\begin{equation}
\left\{ \quad \gathered  -\Delta w^\ep -
|\al| \, \frac{x}{|x|^2} \cdot \nabla w^\ep \ = \ -|x|^\al\div f^\ep  \qquad\mbox{in}\quad \mathcal D'(\Om), \\
w^\ep|_{\cd \Om} \ = \ 0. \qquad \qquad
\endgathered\right.
\label{b=0}
\end{equation}
\end{lemma}

\begin{proof} As the right-hand side of the equation \eqref{b=0} is smooth it can be represented in the form $\div g^\ep$ for some $g^\ep\in L_{q_1}(\Om)$ with some
$q_1>2$. Hence the result follows from
Theorem
 \ref{Theorem_1}.
\end{proof}

\begin{lemma}\label{Zero_b_negative}    Assume  $\al<0$. Then there exists $p_2>2$ depending only on $\al$ and
$\Om$ such that  for any   $B_\ep\Subset \Om$ and any  $f^\ep \in
 C_0^\infty(\Om\setminus B_\ep)$ there exists a unique $p_2$--weak solution $u^\ep$  to the problem
\begin{equation}
\left\{ \quad \gathered  -\Delta u^\ep - \al  \, \frac{x}{|x|^2} \cdot \nabla u^\ep \ = \ -\div f^\ep  \qquad\mbox{in}\quad \mathcal D'(\Om), \\
u^\ep|_{\cd \Om} \ = \ 0,  \qquad u^\ep(0) = 0. \qquad\qquad \quad
\endgathered\right.
\label{b=0_negative}
\end{equation}

\end{lemma}

\begin{proof}
Let $w^\ep$ be a unique $p_1$-weak solution to the problem
\eqref{b=0} with some $p_1>2$ depending only on $\al$ and $\Om$.
 Define the function
$$
u^\ep(x) \ := \ |x|^{|\al|} w^\ep(x).
$$
It is easy to see that  $u^\ep\in W^1_{p_2}(\Om)$ for some $p_2>2$
depending only on $p_1$ and $|\al|$. The direct computation shows
that $u^\ep$ is a $p_2$-weak solution to the problem
\eqref{b=0_negative}.
\end{proof}

\begin{lemma}\label{4.3}  Assume  $\al<0$ and $q>2$. There exists $p>2$  depending only on $q$, $\Om$ and $\al$ such that  for any
$f\in L_q( \Om)$ there exists a unique  $p$-weak solution $u$ to the
problem
\begin{equation}
\left\{ \quad \gathered  -\Delta u -\al  \, \frac{x}{|x|^2} \cdot \nabla u \ = \ -\div f  \qquad\mbox{in}\quad  \Om,   \\
u|_{\cd \Om} \ = \ 0,  \qquad u(0) = 0. \qquad\qquad \quad
\endgathered\right.
\label{b=0_negative_1}
\end{equation}
Moreover, $u$ satisfies the estimate
$$
\| u \|_{W^1_p(\Om)} \ \le \ C~\| f  \|_{L_q(\Om)}
$$
where $C>0$ is some constant depending only on $q$ and $\Om$.
\end{lemma}

\begin{proof} Assume $f\in L_q(\Om)$ and take $f^\ep\in
C_0^\infty(\Om\setminus B_\ep)$ so that $f^\ep\to f$ in $L_q(\Om)$
as $\ep\to +0$.  Denote by $p>2$ the exponent determined in Theorem
\ref{Higher_Integrability_2} (without loss of generality we can
assume $p\le p_2$ where $p_2$ is determined in Lemma
\ref{Zero_b_negative}). Denote by $u^\ep$ a unique $p$-weak solution
to the problem \eqref{b=0_negative}. Then from Theorem
\ref{Higher_Integrability_2} we obtain
$$
\| u^\ep\|_{L_p(\Om)} \ \le \ C\, \|f^\ep\|_{L_q(\Om)}
$$
and hence there exists $u\in \overset{\circ}{W}{^1_p}(\Om)$ such
that for some subsequence the weak convergence $u^\ep
\rightharpoonup u$ in $W^1_p(\Om)$ takes place. It is easy to see
that $u$ satisfies the integral identity \eqref{Identity_2D} with
$b=0$. Moreover, as $u^\ep(0)=0$ and the imbedding
$W^1_p(\Om)\hookrightarrow C(\bar \Om)$ is compact we obtain
$u(0)=0$. So, $u$ is a $p$-weak solution to the problem
\eqref{b=0_negative_1}. The uniqueness of $u$ follows from
\eqref{Energy}.
\end{proof}

 \medskip
\begin{lemma}\label{4.4}  Assume  $\al<0$,   $b\in C^\infty(\bar \Om)$, $\div b =0$ in
 $\Om$,   $q>2$ and $p>2$  is the exponent depending only on $\Om$, $q$, $\al$, $\|b\|_{L_{2,w}(\Om)}$ defined in Lemma \ref{4.3}.
 Then for any $f \in L_q(\Om)$, $v\in L_q(\Om)$  there exists a
 unique $p$-weak solution $u\in
 \overset{\circ}{W}{^1_p}(\Om)$  to the problem
 $$
\left\{ \quad \gathered  -\Delta u  -\al\, \frac{x}{|x|^2} \cdot \nabla u \ = \ -\div (f +bv) \qquad\mbox{in}\quad  \Om, \\
u|_{\cd \Om} \ = \ 0, \qquad u(0) = 0. \qquad\qquad \quad
\endgathered\right.
 $$
Moreover, the operator $A: L_q(\Om)\to L_q(\Om)$, $A(v):=u$, is
continuous and compact.
\end{lemma}

\begin{proof} For given
$f$, $v\in L_q(\Om)$ the existence of a unique $p$-weak solution $u$
satisfying $u(0)=0$  follows from Lemma \ref{4.3}. Moreover, $u$
satisfies the estimate
$$
\| u\|_{W^1_p(\Om)} \ \le \ C~(\|f\|_{L_q(\Om)} + \|
b\|_{L_\infty(\Om)} \| v\|_{L_q(\Om)}).
$$
For a given $f\in L_q(\Om)$ the last estimate implies
$$
\| A(v_1)-A(v_2)\|_{W^1_p(\Om)} \ \le \ C~ \| b\|_{L_\infty(\Om)} \|
v_1 - v_2\|_{L_q(\Om)}
$$
and hence the operator $A$ is continuous as an operator from
$L_q(\Om)$ into $L_q(\Om)$. The compactness of this operator in
$L_q(\Om)$ follows from the compactness of the imbedding of
$W^1_p(\Om)$ into $L_q(\Om)$.
\end{proof}

 \medskip
\begin{lemma}\label{4.5}
 Assume  $\al<0$,   $b\in C^\infty(\bar \Om)$, $\div b =0$ in
 $\Om$,   $q>2$ and $p>2$  is the exponent depending only on $\Om$, $q$, $\al$, $\|b\|_{L_{2,w}(\Om)}$ defined in Lemma  \ref{4.3}.
  Then for any  $f \in L_q(\Om)$  there exists
a
 unique $p$-weak solution $u $  to the problem
 $$
\left\{ \quad \gathered  -\Delta u  +\Big( b-\al \, \frac{x}{|x|^2}\Big) \cdot \nabla u \ = \ -\div f \qquad\mbox{in}\quad \Om,  \\
u|_{\cd \Om} \ = \ 0, \qquad u(0) = 0. \qquad\qquad \quad
\endgathered\right.
 $$
 \end{lemma}

\begin{proof} We apply the Leray-Schauder fixed point theorem for
the operator $A: L_q(\Om)\to L_q(\Om)$ defined in Lemma \ref{4.4}.
Assume $\la\in [0,1]$ and $v\in L_q(\Om)$ satisfies $v=\la A(v)$.
Denote $u:=A(v)$. Then $u$ is  a unique  $p$-weak solution to the
problem
$$
\left\{ \quad \gathered  -\Delta u  +\Big( \la b-\al \, \frac{x}{|x|^2}\Big)\cdot \nabla u \ = \ -\div f \qquad\mbox{in}\quad \Om, \\
u|_{\cd \Om} \ = \ 0, \qquad u(0) = 0. \qquad\qquad \quad
\endgathered\right.
 $$
 From Theorem \ref{Higher_Integrability_2} we obtain the estimate
 $$
 \| u\|_{W^1_p(\Om)} \ \le \ C\, \| f\|_{L_q(\Om)}
 $$
 with some constant depending only on $q$, $\Om$, $\al$ and $\|
 b\|_{L_{2,w}(\Om)}$ and independent of $\la\in [0,1]$. Hence there
 exists   $u\in \overset{\circ}{W}{^1_p}(\Om)$
 satisfying $u=A(u)$.
\end{proof}

Finally, we can relax the smoothness conditions on the
divergence-free part of the drift and prove Theorem \ref{Theorem_2}:

\begin{proof} Assume $b$ satisfies \eqref{Assumptions_b_2D} and let $p>2$ be the exponent defined in  Lemma \ref{4.5}. From
Lemma \ref{Approximation_result} we obtain existence of $b_\ep\in
C^\infty(\bar \Om)$ such that $\div b_\ep=0$ in $\Om$, $\|
b_\ep\|_{L_{2,w}(\Om)}\le c\, \| b\|_{L_{2,w}(\Om)}\le $  and $b_\ep
\to b$  in $L_{p'}(\Om)$ where $p'=\frac{p}{p-1}$. From Lemma
\ref{4.5} we conclude that for any $\ep>0$ there exists a unique
$p$-weak solution $u^\ep$ to the problem
$$
\left\{ \quad \gathered  -\Delta u^\ep  +b^{(\al)}_\ep \cdot \nabla u^\ep \ = \ -\div f \qquad\mbox{in}\quad \Om, \\
u^\ep|_{\cd \Om} \ = \ 0, \qquad u^\ep(0) = 0, \qquad\qquad \quad
\endgathered\right.
 $$
 where $b^{(\al)}_\ep = b_\ep -\al \frac{x}{|x|^2}$. From Theorem
 \ref{Higher_Integrability_2} we obtain the estimate
 $$
\| u^\ep \|_{W^1_p(\Om)} \ \le \ C\, \|f\|_{L_q(\Om)}
 $$
 where the constant $C>0$ depends only on $a$, $\Om$, $\al$ and $\|
 b\|_{L_{2,w}(\Om)}$. Then there exists  $u\in \overset{\circ}{W}{^1_p}(\Om)$  such that  for some subsequence
 $u^\ep$ the convergence \eqref{Weak_conv} holds. The rest of the
 proof repeats the proof of Theorem \ref{Theorem_1} in the beginning of this section. The identity
 $u(0)=0$ follows from $u^\ep(0)=0$ and compactness of the imbedding
 of $W^1_p(\Om)$ into $C(\bar \Om)$. The uniqueness of $p$-weak
 solutions follows from \eqref{Energy}.
\end{proof}

\newpage
\section{Appendix}

\bigskip

Here we present a variant of the imbedding theorem  that we used in
Section \ref{A priori estimates Section}:

\begin{proposition} \label{Poincare_inequality}
Assume $B_R\subset \mathbb R^n$, $n\ge 2$,  $p\in [1, n)$ and $p_*
:=\frac{pn}{n-p}$. Then for any $\la \in (0,1)$ there exists
$C=C(n,p )>0 $ such that if $u\in W^1_p(B_R)$ satisfies  $$|\{ \,
x\in B_R: \, u(x)=0 \, \}| \ \ge \ \la |B_R|$$ then
$$
\| u\|_{L_{p_*}(B_R)} \ \le \ C(n,p)\, \la^{-1}~\|\nabla
u\|_{L_p(B_R)}.
$$

\end{proposition}

\begin{proof} From the Sobolev imbedding
theorem we conclude
$$
\| u\|_{L_{p_*}(B_R)} \ \le \ \frac{c}R\,\| u\|_{L_p(B_R)}
+c~\|\nabla u\|_{L_p(B_R)}.
$$
Denote $E:=\{ \, x\in B_R: \, u(x)=0 \, \}$ and $u_\Om:= \pint_\Om
u~dx$. Then we have
$$
\gathered  \| u\|_{L_p(B_R)} \ = \ \| u-u_E\|_{L_p(B_R)} \ \le \ \|
u-u_{B_R}\|_{L_p(B_R)} +|B_R|^{   1/p}|u_E-u_{B_R}| \ \le \\ \le \
\| u-u_{B_R}\|_{L_p(B_R)} \ +\ \frac{|B_R|^{
1/p}}{|E|}\|u-u_{B_R}\|_{L_1(B_R)}.
\endgathered
$$
Now the result follows from the usual Poincare and H\" older
inequalities:
$$
\gathered \| u-u_{B_R}\|_{L_p(B_R)} \ \le \ c R\, \|\nabla
u\|_{L_p(B_R)}
\\
\|u-u_{B_R}\|_{L_1(B_R)} \ \le \ |B_R|^{1/p'}
\|u-u_{B_R}\|_{L_p(B_R)} \ \le \ c\, |B_R|^{1/n+1/p'} \|\nabla u
\|_{L_p(B_R)}.
\endgathered
$$
\end{proof}

\newpage


\begin{thebibliography}{99}
\bibitem{Berg_Lefstrem}

 {\sc J.~Bergh, J.~L\" ofstr\" om}, Interpolation spaces. An
introduction. Springer, 1976.



\bibitem{CSTY_1}
{\sc Ch.-Ch.~Chen, R.M.~Strain, T.-P.~ Tsai, H.-T.~ Yau}, {\it Lower
bounds on the blow-up rate of the axisymmetric Navier-Stokes
equations}, International Mathematics Research Notices  (2008), no.
9.

\bibitem{CSTY_2}
{\sc Ch.-Ch.~Chen, R.M.~Strain, T.-P.~ Tsai, H.-T.~ Yau}, {\it Lower
bounds on the blow-up rate of the axisymmetric Navier-Stokes
equations II}, Communications in Partial Differential Equations, 34
(2009), no. 3, 203--232.


\bibitem{Evans}
{\sc L.C.~Evans}, {\it Partial Differential Equations}, Second
Edition, AMS, 2010.


\bibitem{Filonov}
{\sc N.~Filonov},  \textit{On the regularity of solutions to the
equation} $-\Delta u+b\cdot \nabla u=0$, Zap. Nauchn. Sem. of
Steklov Inst.  410  (2013),  168-186; reprinted in J. Math. Sci.
(N.Y.)  195  (2013), no. 1, 98--108.

\bibitem{Filonov_Shilkin}
{\sc N.~Filonov, T.~Shilkin}, {\it  On some properties of weak
solutions to elliptic equations with divergence-free drifts}.
Mathematical analysis in fluid mechanics: selected recent results,
Contemp. Math., 710, Amer. Math. Soc., Providence, RI (2018),
105--120.

\bibitem{Fri_Vicol}
{\sc S.~Friedlander, V.~Vicol}, {\it Global well-posedness for an
advection-diffusion equation arising in magneto-geostrophic
dynamics}, Ann. Inst. H. Poincare Anal. Non Lineaire, 28 (2011), no.
2, 283--301.

\bibitem{Garfakos}
{\sc L.~Grafakos}, Classical Fourier analysis, Springer, New York,
2009.

\bibitem{Giaquinta}
{\sc M.~Giaquinta}, Multiple Integrals in the Calculus of Variations
and Nonlinear Elliptic Systems. Princeton University Press, 1983.

\bibitem{Gi_Ra}
{\sc V.~Girault, P.-A.~Raviart}, Finite element approximation of the
Navier-Stokes equations, Springer, 1979.

\bibitem{IKR}
{\sc M.~Ignatova, I.~Kukavica, L.~Ryzhik}, {\it The Harnack
inequality for second-order elliptic equations with divergence-free
drifts}, Comm. Math. Sci., 12 (2014),  681--694.

\bibitem{Kang_Kim}
{\sc B.~Kang, H.~Kim}, {\it $W^{1,p}$-estimates for elliptic
equations with lower order terms}. Commun. Pure Appl. Anal. 16
(2017), 799--821.

\bibitem{Kim_Kim}
{\sc H.~Kim, Y.~Kim}, {\it On weak solutions of elliptic equations
with singular drifts}. SIAM J. Math. Anal. 47 (2015), no. 2,
1271--1290.


\bibitem{Tsai}
{\sc H.~Kim, T.-P.~Tsai}, {\it Existence, uniqueness, and regularity
results for elliptic equations with drift terms in critical weak
spaces},  SIAM J. Math. Anal. 52 (2020), no. 2, 1146--1191.


\bibitem{KNSS} {\sc G.~Koch, N.~Nadirashvili, G.~Seregin,
V.~Sverak}, {\it Liouville theorems for the Navier-Stokes equations
and applications.} Acta Math. {203} (2009), no. 1, 83--105.

\bibitem{Kozono_Yamazaki}
{\sc H. Kozono, M. Yamazaki}, {\it Uniqueness  criterion  of  weak
solutions  to  the  stationary  Navier-Stokes equations in exterior
domains}. Nonlinear Analysis 38 (1999), no. 8, 959--970.

\bibitem{LU}
{\sc O.A.~Ladyzhenskaya, N.N.~Uraltseva}, \textit{Linear and
quasilinear equations of elliptic type}, Academic Press, 1968.


\bibitem{Lis_Zhang}
{\sc V.~Liskevich, Q.S.~Zhang}, {\it Extra regularity for parabolic
equations with drift terms}, Manuscripta Math.  {113} (2004), no. 2,
191--209.

\bibitem{MV}
{\sc V.G.~Maz'ja, I.E.~Verbitskiy}, \textit{Form boundedness of the
general second-order differential operator}, Comm. Pure Appl. Math.
 {59} (2006), 1286--1329.


\bibitem{Mosc}
{\sc G.~Moscariello}, {\it Existence and uniqueness for elliptic
equations with lower-order terms}. Adv. Calc. Var. 4 (2011),
421--444.


\bibitem{NU}
{\sc A.I.~Nazarov, N.N.~Uraltseva}, {\it The Harnack inequality and
related pro\-per\-ties of solutions of elliptic and parabolic
equations with divergence-free lower-order coefficients},  St.
Petersburg Math. J. {23} (2012), no. 1, 93--115.

\bibitem{ONeil}
{\sc R.~O'Neil}, {\it Convolution operators and L(p,q) spaces}, Duke
Math. J. 30 (1963) 129--142.

\bibitem{Seregin_Book}
{\sc G.~Seregin},  Lecture notes on regularity theory for the
Navier-Stokes equations. World Scientific Publishing Co. Pte. Ltd.,
Hackensack, NJ, 2015.

\bibitem{Seregin_Shilkin_UMN}
{\sc G.~Seregin, T.~Shilkin}, {\it Liouville-type theorems for the
Navier--Stokes equations}, Russian Math. Surveys 73:4 (2018),
661--724.

\bibitem{SSSZ}
{\sc G.~Seregin, L.~Silvestre, V.~Sverak, A.~Zlatos}, {\it On
divergence-free drifts}. J. Differential Equations {252} (2012), no.
1, 505--540.


\bibitem{SS}
{\sc G.~Seregin, V.~Sverak,} {\it On type I singularities of the
local axi-symmetric solutions of the Navier--Stokes equations},
Comm. Partial Differential Equations {34} (2009), no. 1-3, 171--201.


\bibitem{Vicol}
{\sc L. Silvestre, V. Vicol},  \textit{H\" older continuity for a
drift-diffusion equation with pressure}, Ann. Inst. H. Poincare
Anal. Non Lineaire    {29}  (2012), no. 4, 637--652.


\bibitem{Stein_Weiss}
{\sc E.~Stein, G.~Weiss}, Introduction to Fourier Analysis on
Euclidean Spaces,  Princeton University Press, 1971.

\bibitem{Surn}
{\sc M.D.~Surnachev,} {\it On the uniqueness of solutions to
stationary convection-diffusion equations with generalized
divergence-free drift}, Complex Variables and Elliptic Equations 63
(2018), no. 7-8, 1168--1184.




\bibitem{Tsai_Book}
{\sc T.-P.~ Tsai}, {\it Lectures on Navier--Stokes equations},
Graduate Studies in Mathematics, AMS, 2018.


\bibitem{Zhang}
{\sc Q.S.~Zhang}, {\it A strong regularity result for parabolic
equations}, Commun. Math. Phys.  {244}  (2004), no. 2, pp. 245--260.


\bibitem{Zhikov}
  {\sc V.V.~Zhikov}, {\it Remarks on the uniqueness of the
solution of the Dirichlet problem for a second-order elliptic
equation with lower order terms}, Funct. Anal. Appl.  {38} (2004),
no. 3, 173--183.


\end{thebibliography}
\end{document}